\documentclass[colorlinks=false]{article}

\usepackage[utf8]{inputenc}

\usepackage{graphicx} 

\usepackage{tikz}   
\usetikzlibrary{shapes,arrows,positioning}
\usepackage{tkz-graph}

\usepackage{amsmath, amssymb, dsfont}
\usepackage[a4paper,left=3cm,right=3cm,top=4cm,bottom=4cm,bindingoffset=5mm]{geometry}

\usepackage{pgfpages, pgfplots} 
\pgfplotsset{compat=1.18} 
\pgfplotsset{every tick label/.append style={font=\footnotesize}}

\usepackage{hyperref}
\hypersetup{colorlinks=false}

\setcounter{tocdepth}{4}
\setcounter{secnumdepth}{4}

\usepackage[minbibnames=1,maxbibnames=4,style=alphabetic, sorting=nyt,giveninits=true]{biblatex}
\usepackage{csquotes}
\addbibresource{Bibliography.bib}

\renewcommand{\P}{\mathds P}
\newcommand{\EW}{\mathds E}

\newcommand{\N}{\mathds N}
\newcommand{\R}{\mathds R}

\newcommand{\sigF}{\mathcal F}
\newcommand{\sigA}{\mathcal A}


\DeclareMathOperator{\diag}{diag}

\newcommand{\given}{\,|\,}



\newcommand{\vone}{\mathds1} 

\RequirePackage{amsthm}
\newtheoremstyle{styAufgabe} 
  {5pt}   
  {10pt}   
  {}   
  {}   
  {\normalfont\bfseries\sffamily}   
  {\medskip}   
  {\newline}   
  {\thmname{#1} \thmnumber{#2} \thmnote{(#3)}\ \hrulefill}  

\makeatletter
\def\@endtheorem{\medskip\textbf{\hrule}}
\makeatother

\theoremstyle{styAufgabe}

\newtheorem{theo}{Theorem}[section]
\newtheorem{cor}{Corollary}[section]

\newtheorem{prop}{Proposition}[section]
\newtheorem{defin}{Definition}[section]
\newtheorem{rem}{Remark}[section]
\newtheorem{rem*}{Remark}

\newcommand{\trG}[4][1]{
    \tikz[scale=#1]{ 
        \draw[fill = #2, line width = 0.2pt] (-30:0.1) circle (1.5pt); 
        \draw[fill = #3, line width = 0.2pt] ( 90:0.1) circle (1.5pt); 
        \draw[fill = #4, line width = 0.2pt] (210:0.1) circle (1.5pt); 
    }
}

\newcommand{\twoG}[3][1]{
    \tikz[scale=#1]{
        \draw[black,fill=#2, line width = 0.2pt] (0:0.1) circle (1.5pt);
        \draw[black,fill=#3, line width = 0.2pt] (180:0.1) circle (1.5pt);
    }
}

\newcommand{\configs}{\mathcal X}

\title{Fixation probability\\ in Moran-like Processes on graphs}
\author{Dr. Peter Keller\footnote{Research Assistant at the University of Potsdam, mail: peter.keller-at-uni-potsdam.de}\\
Mert U\u gurlu\footnote{Master's Student of Mathematics, University of Potsdam}}
\date{Last update: \today}

\setlength{\parindent}{4pt}

\begin{document}
\maketitle

\begin{abstract}
The well-known \emph{Isothermal Theorem} was introduced in a Nature Communications article in 2005 and has since contributed to the creation of the rich field of \emph{evolutionary graph theory}. The theorem states under which conditions certain Moran-like processes on graphs (``spatial Moran Processes'') have the same fixation probability as the classic one-dimensional Moran Process that was introduced by Moran in 1958.

Unfortunately, the Isothermal Theorem has never been proven completely. The main argument, that the projection of the process on the graph dynamics onto a one-dimensional process is a Birth-and-Death-Process, is not true in general, as the projection does not need to be Markovian.

The aim of this paper is to present a more general version of the Isothermal Theorem using martingale techniques and a generalised framework using matrix notation.

We follow up with a short study of small population size that shows the set of spatial Moran Processes with Moran fixation probability is even richer than previously understood. We underline the role played by the initial condition, and how individuals of the population are chosen for procreation.
\bigskip

\textbf{Keywords:} Moran Process, Markov Chains on Graphs, Martingales, Evolutionary Graph Theory
\end{abstract}
\pagebreak 

\section{Introduction}

\subsection{The classic Moran Process}

In population genetics, the classic \emph{Moran Process} is a fundamental integer-valued stochastic process to understand genetic drift in finite, well mixed populations of $n\in\N$ individuals with overlapping generations and selection, and two types of competing alleles (gene variants). It was introduced in 1958, see \cite{Moran58}. There are two types of alleles: the original allele (\emph{wildtype}) and a mutation (\emph{mutant}). The Moran Process tracks the number of mutated alleles through time, where wildtypes may be replaced by mutants through asexual reproduction and vice versa. It is thus a simple example of a Birth-and-Death-Process $(M_k)_k$ on the integers $\{0,1,\ldots,n\}$ with absorbing boundaries. 

Initially, $i\in\N$ \emph{mutants} invade the population and displace wildtypes, such that the population size remains constant. After this initial invasion, no other mutants immigrate into the population. Selection of individuals (mutant or wildtype) for procreation is skewed according to the fitness parameter $r\in\R^+$ of the mutants. Fitness does not depend on the individuals. For $0<r<1$, the procreation of mutants is suppressed, for $r=1$ the mutation is neutral (no preference of mutants over wildtype), and for $r>1$, the procreation of mutants is amplified. Given there are $j$ mutants present in the population, a mutant is selected with probability
    \begin{equation}\label{eq:selprobm}
        \frac{r\frac{j}{n}}{r\frac{j}{n}+\frac{n-j}{n}}
        =\frac{1}{1+(r-1)\frac{j}{n}}\frac{r\,j}{n},
    \end{equation}
        
and a wildtype with probability
    \begin{equation}\label{eq:selprobw}
        \frac{\frac{n-j}{n}}{r\frac{j}{n}+\frac{n-j}{n}}
        =\frac{1}{1+(r-1)\frac{j}{n}}\frac{n-j}{n},
    \end{equation}
where $j$ is the number of mutants currently present in the population. The individual to be replaced is chosen uniformly at random. In some models, the selection procedure is reversed, see \cite{Diaz21} for an overview of the variants.

We denote with $p^+(j)$ the probability to increase the mutant population by one, assuming $j$ mutants present in the population. Analogously, for the probability to decrease the number of mutants by one, which we denote with $p^-(j)$. We thus denote
    \begin{equation}
    \label{eq:ClassicMoran}
    p^+(j):=\frac{r}{1+(r-1)\frac{j}{n}}\,\frac{j}{n}\frac{(n-j)}{n}, \qquad
    p^-(j):=\frac{1}{1+(r-1)\frac{j}{n}}\frac{(n-j)}{n}\frac{j}{n},
    \end{equation}
where $j$ is the current number of mutants in the population. The  states $0$ and $n$ are absorbing. The graph in figure \ref{fig:ClassMoranTransitionGraph} illustrates the dynamics.

The stopping time  
\[T_{fix}:=\inf\{k\geq 0:M_k=n\},\]
is the first time the population consists only of mutants and is called \emph{fixation time}. Since fixation is not certain, it is interesting to compute the fixation probability, i.e. $\P(T_{fix}<\infty|M_0=i)$. We shortly recall a computation using a martingale argument.

\begin{figure}
\begin{center}
    \begin{tikzpicture}[scale=1.25]
        \tikzset{every node/.style = {draw, circle, minimum size = .8cm, font = \scriptsize}};
        \tikzset{every path/.style = {->, line width = 1pt, outer sep = 2 pt}};

        \node (0)    at (-0.5, 0) {0};
        \node[draw=none] (ddd1) at ( 2, 0) {$\ldots$};
        \node (j)    at ( 5, 0) {j};
        \node[draw=none] (ddd2) at (8, 0) {$\ldots$};
        \node (n)    at (10.5, 0) {n};

        \draw[->] (ddd1) to node[draw=none, fill=white] {$p^-(1)$} (0);

        \draw[->] (ddd2) to node[draw=none, fill=white] {$p^+(n-1)$} (n);
        
        \draw[->] (j) to[bend right] node[draw=none, fill= white] {$p^-(j)$} (ddd1);
        \draw[->] (ddd2) to[bend right] node[draw=none, fill= white] {$p^-(j+1)$} (j);
        \draw[->] (ddd1) to[bend right] node[draw=none, fill= white] {$p^+(j-1)$} (j);
        \draw[->] (j) to[bend right] node[draw=none, fill= white] {$p^+(j)$} (ddd2);

        \draw (0) to[loop, out =  70, in = 110, looseness=5] node[draw=none, above, outer sep = 2pt] {1} (0);
        \draw (j) to[loop, out =  70, in = 110, looseness=5] node[draw=none, above=-25pt] {$1-p^+(j)-p^-(j)$} (j);
        \draw (n) to[loop, out =  290, in = 250, looseness=5] node[draw=none, below, outer sep = 2pt] {1} (n);
        
    \end{tikzpicture}
    \[p^+(j)=\begin{cases}
        \frac{r}{1+(r-1)\frac{j}{n}}\frac{j}{n}\frac{n-j}{n},& \text{if $0<j<n$},\\
        0,&\text{else.}
        \end{cases},\qquad
        p^-(j)=\begin{cases}
        \frac{1}{1+(r-1)\frac{j}{n}}\frac{n-j}{n}\frac{j}{n},& \text{if $0<j<n$},\\
        0,&\text{else.}
        \end{cases}\]
\end{center}
    \caption{Transition graph of the dynamics of the classic Moran Process for population size $n$.}
    \label{fig:ClassMoranTransitionGraph}
\end{figure}

\subsubsection{Symmetric case $r=1$ (neutral selection)} 
For $r=0$ the equations (\ref{eq:selprobm}) and (\ref{eq:selprobw}) simplify for all $j$, $0<j<n$, to
\[
    p^+(j)
    =\frac{j}{n}\frac{n-j}{n}
    =p^-(j).
\]
Thus, one finds that $(M_k)_{k\geq 0}$ is a martingale with respect to its canonical filtration $(\sigF_k)_k$. Using Markov Property gives
\begin{align*}
    \EW[M_{k+1}\given \sigF_k]
    &=M_k+\EW[M_{k+1}-M_k\given M_k]\\
    &=M_k+(+1)\cdot p^+(M_k)+(-1)\cdot p^-(M_k)
    =M_k.
\end{align*}
The absorption time 
    \[T:=\inf\{k\geq 0:M_k\in\{0,n\}\}\]
is almost surely finite, i.e. $\P(T<\infty\given M_0=i)=1$, for all $0<i<n$. Applying the Optional Stopping Theorem, one gets
\[
    i
    =\EW[M_0]
    =\EW[M_T]
    =(1-\P(T_{fix}<\infty))\cdot 0+\P(T_{fix}<\infty)\cdot n.
\]
Rearranging gives
\[
    \P(T_{fix}<\infty\given M_0=i)
    =\frac{i}{n}.
\]
\subsubsection{Asymmetric case $r\neq1$ (non-neutral selection)} 
For $r\neq 1$, one notices that equations (\ref{eq:selprobm}),(\ref{eq:selprobw}) fulfil for every $j$, $0<j<n$,
\[
    \frac{p^-}{p^+}
    =\frac{1}{r}.
\]
Furthermore, the function $h(j):=r^{-j}$ is harmonic on the transient states $\{1,2,\ldots,n-1\}$ and thus $(r^{-M_k})_{k\geq 0}$ is a martingale, too. The Optional Stopping Theorem then implies 
\begin{align*}
    r^i
    &=\EW[r^{M_0}]
    =\EW[r^{M_T}]
    =(1-\P(T_{fix}<\infty))\cdot r^0+\P(T_{fix}<\infty)\cdot r^n.
\end{align*}
Rearranging gives
    \[\P(T_{fix}<\infty\given M_0=i)
        =\frac{1-1/r^i}{1-1/r^n}.\]

\subsubsection{Moran fixation probabilities}
In the literature on \emph{evolutionary graph theory}, see \cite{Lieberman2005,Nowak2006,Diaz21}, the absorption probabilities
\begin{equation}
    \label{eq:MoranFixationProb}
    \varrho_i:=
    \begin{cases}
        \frac{i}{n}&r=1\\
        \frac{1-1/r^i}{1-1/r^n}&r\neq 1,
    \end{cases}
    \qquad i\in\{1,2,\ldots,n-1\},
\end{equation}
are referred to as \emph{Moran fixation probabilities}. The main idea of evolutionary graph theory is to determine under which population structures the fixation probabilities are still $(\varrho_i)_{0<i<n}$.


\subsection{Extension of the Moran Process to structured populations}
In the past, the classic Moran Process has been extended to take population structure into account, see \cite{Diaz21} for a review on the history and recent research on the topic. The population structure is now modelled via a directed, weighted graph. The vertices of the graph can either be occupied by a mutant or a wildtype. Once an individual is chosen, only its direct neighbour can be replaced with a copy.

The dynamics defines a Markov Chain, where its state space consists of all the possible marked graphs with respect to the original (unmarked) graph (details in section 2). Again, one can ask about the probability of fixation, i.e. the probability that eventually all vertices of the graph are occupied by mutants.

In \cite{Lieberman2005}, the authors claim that under some conditions on the graph and the weights, this process has the same fixation probabilities $(\varrho_i)_i$ as the Moran Process, see (\ref{eq:MoranFixationProb}). This result is known in the literature as \emph{Isothermal Theorem}. The proof uses a projection onto a simpler process $(M_k)_k$. We choose the same notation as in the previous subsection for the classic Moran process, as this projection shares many properties with the original Moran Process. They then use results from Birth-and-Death-Processes for the computation of the fixation probability. Unfortunately, this method fails, as the projection of a Markov Chain is not necessarily Markov, unless under lumpability, see \cite{Marin2017,Buchholz1994,KemenySnell1976} for extensive analysis of the conditions for a projection onto a smaller state space to be Markov. This shortcoming has not been noticed in the literature so far, not even in the recent review paper \cite{Diaz21}. Therefore, the isothermal theorem remains unproven till today. 

Attempts towards applications of martingales in the isothermal case for the microscopic spatial Moran Process have been made before in \cite{Monk14,Monk21,Monk22}. However, some of their assumptions are either not applicable (for instance independence of increase/decrease steps) or are special cases when the macroscopic spatial Moran Process is Markov, i.e. when the dynamics is lumpable.

The scope of this paper is as follows: We first generalise the model, by allowing a non-uniform selection policy for the choice of the next individual to procreate, instead of choosing uniformly at random. Then, with an analogue approach as for the classic Moran Process above, we construct a martingale $(r^{-M_k})_k$ which, regardless of the non-Markovianity of $(M_k)_k$, delivers Moran fixation probability by Optional Stopping Theorem. The Isothermal Theorem then follows as a simple corollary.


\subsection{Organisation of the paper}
In Section 2 we introduce the \emph{spatial Moran Process}, extending previous definitions such that selection of individuals for procreation is not necessarily uniform, but \emph{stationary} with respect to the weights of the graph. We state the Isothermal Theorem and then give explicit expressions for the probability to increase/decrease the mutant population in matrix notation. In section 3, we prove that the spatial Moran Process shows the same fixation probability as the simple, one-dimensional classic Moran Process, independently of the initial condition. Section 4 explores all possible choices of parameters to retain Moran fixation in the simple case of $n=2$. In particular, the example shows, that stationary selection is not necessary for Moran fixation probabilities to appear.

\section{The spatial Moran Model}

\begin{figure}
    \begin{center}
        \begin{tikzpicture}
            \node (g0) at (0,3.4) {$\configs_0$};
            \node (g1) at (3,3.4) {$\configs_1$};
            \node (g2) at (6,3.4) {$\configs_2$};
            \node (g3) at (9,3.4) {$\configs_3$};

            \node (g3) at (11.35,-3) {$\configs$};
            
        \tikzset{
            every path/.style = {->, line width = 1pt, outer sep = 4pt, draw},
            every node/.style = {draw = black, circle, fill = white}
        };
              \draw[rounded corners, dashed] (-2,-3.5) rectangle ( 11,4);
              \draw[rounded corners, fill = lightgray!50, draw = gray] (-1,-3) rectangle ( 1,3);
              \draw[rounded corners, fill = lightgray!50, draw = gray] ( 2,-3) rectangle ( 4,3);
              \draw[rounded corners, fill = lightgray!50, draw = gray] ( 5,-3) rectangle ( 7,3);
              \draw[rounded corners, fill = lightgray!50, draw = gray] ( 8,-3) rectangle (10,3);
              
              \node (000) at (0,0) { \trG[2]{white}{white}{white} }; 
              \node (001) at (3,2) { \trG[2]{white}{white}{black} };
              \node (010) at (3,0) { \trG[2]{white}{black}{white} };
              \node (100) at (3,-2) { \trG[2]{black}{white}{white} };
              \node (011) at (6,2) { \trG[2]{white}{black}{black} };
              \node (101) at (6,0) { \trG[2]{black}{white}{black} };
              \node (110) at (6,-2) { \trG[2]{black}{black}{white} };
              \node (111) at (9,0) { \trG[2]{black}{black}{black} };
              
              \path (100) -- (000);
              \path (010) -- (000);
              \path (001) -- (000);

              \path (110) -- (111);
              \path (101) -- (111);
              \path (011) -- (111);

          \tikzset{
            every path/.style = {<->, line width = 1pt, outer sep = 4pt, draw}};
                
              \path (100) -- (110);
              \path (100) -- (101);

              \path (010) -- (110);
              \path (010) -- (011);

              \path (001) -- (101);
              \path (001) -- (011);
              
        \end{tikzpicture}
    \end{center}
    \caption{General setup of the Microscopic Spatial Moran Process (MicSPM), here for $n=3$. We omit the vertices and transition probabilities, and show only occupation by 0s or 1s. The underlying graph is the complete graph $K_{3}$. Whether or not all drawn vertices exist, depends on the choice for the weights given by weight matrix $W$.}
    \label{fig:Modeln3}
\end{figure}

\subsection{Population structure}
To model the structure of a population of $n$ individuals, let $G=(E,V)$ be a directed graph, where the set of vertices is $V:=\{1,2,\ldots,n\}$. 
The $n\times n$ stochastic matrix $W:=(W(v,\tilde v))_{v,\tilde v\in V}$ is the weight matrix of the graph $G$. If $W(v,\tilde v)>0$ it means there is an edge $(v,\tilde v)\in E$, if $W(v,\tilde v)=0$ there is no edge. It is possible to reconstruct $G$ from this information alone. Also, $W(v,.)$ is a probability measure on the direct neighbours of $v$ (including $v$). 

Let further be $\vone:=(1,1,\ldots,1)$. The notation ``$x^\top$'' denotes transposition of a row vector $x$ or a matrix. We further assume that $G$ is strongly connected, i.e. for every two distinct vertices $v,\tilde v\in V$, there is a finite sequence of strictly positively weighted edges connecting $v$ with $\tilde v$ and vice versa. Therefore, there exists a unique, strictly positive left-eigenvalue $\pi:=(\pi_1,\pi_2,\ldots,\pi_n)$ that fulfils
    \begin{equation}
        \pi W=\pi,\qquad \pi\vone^\top=1.
    \end{equation}
Due to the normalisation, we can think of $\pi$ as a probability distribution on the vertices of $G$. Similarly, we can interpret the matrix $\diag(\pi)W$ as a probability measure on the edges of the complete graph $K_n$ with $n$ edges (including loops), where $\diag(\pi)$ denotes the diagonal matrix that has the entries of the vector $pi$ at its diagonal.

Moreover, we want to label a vertex with either $0$ for a wildtype, or with $1$ for a mutant. Let the collection of all possible labels of $G$ with $0$'s and $1$'s be $\configs:=\{0,1\}^V$. An element $x\in\configs$ is called \emph{graph configuration} or just configuration. Obviously, there are $2^n$ such configurations in $\configs$. We decompose $\configs$ disjointly into
    \begin{equation}
        \configs=\bigsqcup_{\ell=0}^n\configs_\ell,
        \label{eq:canonicalDecomposition}
    \end{equation}
where $\configs_\ell$ contains all configurations with exactly $\ell$ mutants resp. 1's. We call this the \emph{canonical decomposition}.

Given the graph structure and a fixed numbering of the vertices, we can represent any graph configuration $x\in\configs$ by a row vector $x\in\{0,1\}^n$ of zeros and ones, i.e. we set the $j^{th}$ component of $x$ to zero resp. one, if vertex $j$ is occupied by a wildtype resp. mutant. For convenience, we will use the notation $x$ for both the configuration as well as the vector representation. Analogously, we will use $\configs$ in both interpretations interchangeably.

\subsection{Dynamics of the microscopic spatial Moran Process}
We introduce two stochastic processes, to describe the development of the population. One describes the dynamics on the marked graph $G$, as described in the previous subsection, in a \emph{microscopic} fashion. The other is the projection onto the canonical decomposition $\configs$, that has similarities to the classic Moran Process, but does not constitute a Markov Chain in general.

\begin{defin}[Microscopic Spatial Moran Process]\label{def:msp}
Let $G$ be a directed, strongly connected graph with weight matrix $W$ and let $\pi$ be its unique strictly positive and normed left-eigenvector. We call $\pi$ \emph{selection policy}.
Let $r\in\R^+$ be a positive constant (called \emph{mutant fitness}). Let $(X_k)_{k\geq 0}$ be a Markov Chain with state space $\configs$ on the filtrated probability space $(\configs, \sigA, \sigF,\P)$, where $\sigF$ is the canonical filtration. 

For $x\in \configs$, we denote with $\zeta:=x\pi^\top$, the probability to select any vertex labelled 1 under selection policy $\pi$ in configuration $x$. If $x=X_k$ we simply write $\zeta_k$. The dynamics of $(X_k)_k$ is given by the following update rule:
\begin{itemize}
    \item 
    $X_0$ is chosen according to the initial probability distribution $\alpha$ on $\configs\setminus(\configs_0\cup\configs_N)$.
    \item 
    At time $k>0$,
    \begin{itemize}
        \item 
        a vertex $v$ is selected for procreation under selection policy $\pi$ and fitness $r$ with probability 
        \[\begin{cases}
            \frac{r\pi(v)}{1+(r-1)\zeta_k} & \text{if $v$ is labelled with 1 (mutant),}\\
            \frac{\pi(v)}{1+(r-1)\zeta_k} & \text{if $v$ is labelled with 0 (wildtype).}
        \end{cases}\]
        \item 
        Independently, another vertex $v'$ is chosen randomly according to the probability measure $W(v,.)$.
    \end{itemize}
\end{itemize}
Each update is independent of the previous one, resp. depends only on the last configuration. This defines a Glauber-dynamics. 

The two configurations where all vertices are occupied by the same type are absorbing. Further define 
\begin{equation}
    T_{fix}:=\inf\{k\geq 0: X_k=\vone\}.
\end{equation}
$T_{fix}$ describes
the first time when all vertices of $G$ are labelled with 1 resp. when all vertices are occupied by mutants.
We denote the probability that fixation is reached given $x\in\configs$
\begin{equation}
\rho_x:=\P(T_{fix}<+\infty\given X_0=x).
\end{equation}

Finally, we introduce the notation
\[\langle (X_k)_{k\geq 0},W,\pi,r\rangle\]
for the \emph{Microscopic Spatial Moran Process} (MicSMP).
\end{defin}
\bigskip

To coarsen the process, we only count the number of mutants currently present in the population (similar to the classic Moran Process). We thereby lose any information about which vertices the mutants occupy. This corresponds to projecting the Markov Chain $(X_k)_k$ onto the canonical decomposition (\ref{eq:canonicalDecomposition}).

\begin{defin}[Macroscopic Spatial Moran Process]
Let $\langle(X_k)_k,W, \pi,r\rangle$ be a given micSMP. We cal the associated projection $(M_k)_k$ defined by
    \begin{equation}
    M_k:=X_k\vone^\top\in\{0,1,2,\ldots,n\},k\geq0
    \label{eq:defMk}
    \end{equation}
the \emph{Macroscopic Spatial Moran Process} (macSMP).
\end{defin}
\bigskip

Let $\sigF$ be the canonical filtration defined by $(X_k)_k$. Note that $M_k$ is a $\sigF$-measurable, bounded function of $X_k$. Thus by Markov Property
    \[\EW[M_{k+1}\given \sigF_k]=\EW[M_{k+1}\given X_k].\]
Nevertheless, it does \emph{not} follow that $(M_k)_k$ is a Markov Chain in general, since usually
    \[\EW[M_{k+1}\given \sigF_k]\neq \EW[M_{k+1}\given M_k].\]

\subsection{The Isothermal Theorem}
In \cite{Lieberman2005}, the authors define the \emph{isothermal property} of the weight matrix $W$ in the following way: If for all $v\in V$ 
    \[\sum_{\tilde v\in V}W( v,\tilde v)=\sum_{\tilde v\in V}W(\tilde v,v),\]
then $W$ is called isothermal. The isothermal property indeed just says that for any vertex of the graph the sum of incoming weights is equal to the sum of outgoing weights. In matrix notation, this is just 
    \[W\vone^\top=W^\top\vone^\top=\vone^\top.\]
The last equality holds because of the assumed stochasticity of $W$. From this, $W$ is isothermal if and only if $W$ is bistochastic, and the invariant measure of $W$, fulfilling $\pi W=\pi$, must be uniform.

The next theorem is precise a reformulation of the isothermal theorem by Liebermann, Nowak and Hauert as given in \cite{Lieberman2005} using our notation.
\begin{theo}
    \label{theo:IsothermalTheorem}
    Let $\langle(X_k)_k, W,\pi,r\rangle$ be a micSMP with the strong assumption that $W$ is a bistochastic weight matrix and assume uniform selection. Then for all $i$, $0< i <n$, and any initial distribution $\alpha$ on a transient configuration with $i$ mutants, $0<i<n$,  
        \[\P(T_{fix}<\infty\given X_0\sim \alpha)
        =\begin{cases}
            \frac{i}{n} & r=1,\\
            \frac{1-1/r^i}{1-1/r} & r\neq 1.
        \end{cases}.\]
\end{theo}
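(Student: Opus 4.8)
The plan is to bypass the non-Markovianity of $(M_k)_k$ by observing that $M_k$ is a bounded function of the genuinely Markovian chain $(X_k)_k$, so that by the tower and Markov properties $\EW[M_{k+1}\given\sigF_k]=\EW[M_{k+1}\given X_k]$ and every one-step increment can be computed configuration by configuration. First I would fix a transient configuration $x\in\configs$ with mutant set $S=\{v:x_v=1\}$ and read off from the update rule of Definition \ref{def:msp} that the count increases by one exactly when a mutant procreates onto a wildtype neighbour, and decreases by one in the reverse case. Writing $\zeta=x\pi^\top$, this yields the increase/decrease probabilities
\begin{align*}
p^+(x)&=\frac{r}{1+(r-1)\zeta}\sum_{v\in S}\pi(v)\sum_{v'\notin S}W(v,v'),\\
p^-(x)&=\frac{1}{1+(r-1)\zeta}\sum_{v\notin S}\pi(v)\sum_{v'\in S}W(v,v').
\end{align*}

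The heart of the argument, and the step I expect to be the main obstacle, is a purely algebraic identity: the two fluxes above are \emph{equal} for every configuration. In matrix notation the forward flux expands as $\zeta-x\diag(\pi)Wx^\top$ (using only the stochasticity of $W$, whose row sums are $1$), while the backward flux expands as $\pi Wx^\top-x\diag(\pi)Wx^\top$, which equals $\zeta-x\diag(\pi)Wx^\top$ after invoking the stationarity $\pi W=\pi$. Hence both fluxes coincide and
\[
\frac{p^-(x)}{p^+(x)}=\frac1r\qquad\text{for every transient }x,
\]
exactly the ratio of the classic Moran process. Under the isothermal hypothesis $W$ is bistochastic and $\pi$ is uniform, but it is this flux identity, not the uniformity, that drives the proof; it is precisely what replaces the erroneous Birth-and-Death reduction, since it holds per configuration and never requires $(M_k)_k$ to be Markov.

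Given the ratio, I would verify the relevant martingale property by a direct one-step computation. For $r=1$ one has $\EW[M_{k+1}-M_k\given X_k]=p^+(X_k)-p^-(X_k)=0$, so $(M_k)_k$ is an $\sigF$-martingale. For $r\neq1$, substituting $p^-=p^+/r$ yields
\[
\EW[r^{-M_{k+1}}\given X_k]=r^{-M_k}\bigl(1+p^+(X_k)(r^{-1}-1)+p^-(X_k)(r-1)\bigr)=r^{-M_k},
\]
so $(r^{-M_k})_k$ is a martingale, and it is bounded because $r^{-M_k}$ ranges over the finite set $\{1,r^{-1},\dots,r^{-n}\}$.

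Finally I would apply the Optional Stopping Theorem at the absorption time $T=\inf\{k\ge0:M_k\in\{0,n\}\}$. That $T<\infty$ almost surely follows from $(X_k)_k$ being a finite Markov chain whose only absorbing states are the all-wildtype and all-mutant configurations, since strong connectivity of $G$ together with $r>0$ gives a positive chance of reaching fixation from any transient configuration, so the transient class is left in finite time. The key to independence from the initial law $\alpha$ is that $M_0=i$ is deterministic as soon as $\alpha$ is supported on configurations with exactly $i$ mutants, whence $r^{-M_0}=r^{-i}$; then $\EW[r^{-M_0}]=\EW[r^{-M_T}]$, evaluated at the absorbing values $M_T\in\{0,n\}$ and solved for $\P(T_{fix}<\infty)$, returns the Moran fixation probability $\varrho_i$ of (\ref{eq:MoranFixationProb}), while the neutral case $r=1$ follows identically from the martingale $(M_k)_k$.
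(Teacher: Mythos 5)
Your proposal is correct and follows essentially the same route as the paper: the paper proves Theorem \ref{theo:IsothermalTheorem} as a special case of Theorem \ref{theo:stationarySelectionTheorem}, whose proof rests on exactly your flux identity (equations (\ref{eq:p+})--(\ref{eq:p-}), where stochasticity of $W$ handles $p^+$ and stationarity $\pi W=\pi$ handles $p^-$), the resulting constant ratio $p^-/p^+=1/r$, the martingales $(M_k)_k$ resp.\ $(r^{-M_k})_k$, and the Optional Stopping Theorem at the almost surely finite absorption time. Your observation that uniformity of $\pi$ is never used, only stationarity, is precisely the generalisation the paper itself makes.
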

\bigskip

We skip the proof of Theorem \ref{theo:IsothermalTheorem}, as it is a special case of Theorem \ref{theo:stationarySelectionTheorem}.

\subsection{Probability to increase/decrease the mutant population under stationary selection}
The probability to increase/decrease the mutant population by one for any weight matrix $W$, given a configuration $x\in\configs$, can be expressed in matrix notation. We write $p^+(x)$ for the (total) probability of increasing the mutant population by one, and $p^-(x)$ for the (total) probability to decrease the mutant population by one. These probabilities are transitions from a specific $x\in \configs_j$ into the set $\configs_{j\pm 1}$. 

Let $W_\pi:= \diag(\pi)W$ and $\zeta=x\pi^\top$, then by stochasticity of $W$
    \[
        x W_\pi\vone^\top
            =x\diag(\pi)\vone^\top
            =x\pi^\top
            =\zeta.
    \]
Thus
    \begin{equation}\label{eq:p+}
        p^+(x)
        :=\frac{r}{1+(r-1)\zeta}x W_\pi(\vone-x)^\top
        =\frac{r}{1+(r-1)\zeta}(\zeta-xW_\pi x^\top).
    \end{equation}


Similarly, we compute the probability to decrease the mutant population by one
    \begin{equation}\label{eq:p-}
        p^-(x)
        :=\frac{1}{1+(r-1)\zeta}(\vone-x)W_\pi x^\top
        =\frac{1}{1+(r-1)\zeta}(\zeta-x W_\pi x^\top).
    \end{equation}
The probability not to change the size of the mutant population is 
    \begin{align*}
        p_0(x)
        &:=1-p^+(x)-p^-(x)
        =1-\frac{r+1}{1+(r-1)\zeta}(\zeta-xW_\pi x^\top).
    \end{align*}

\subsubsection*{The classic Moran Process is a projection of a micSMP on the complete graph}
The spatial model introduced above is fully consistent with the classic modelling of the Moran Process as introduced in \cite{Moran58}. The homogeneous population structure of the Moran Process can be thought of as the complete graph with loops on $n$ vertices. The weight of each edge is 1/n to reflect that any individual is neighbour to any other individual including itself and is selected with uniform probability. If we thus set $W=n^{-1}\vone^\top\vone$, then the unique left-invariant vector $\pi$ of $W$ is uniform, i.e. $\pi=n^{-1}\vone$. The probability to select a mutant is $\zeta=x\pi^\top=j/n$ for any configuration $x\in\configs_j$. Note, $j$ is the number of mutants present in the configuration $x$. Therefore, the increase/decrease probabilities do no longer depend on $x$, but only on $j$. 
Since 
    \[
    x\diag(\pi)W x^\top
    =x\pi^\top\frac{1}{n}(x\vone^\top)^\top
    =\zeta^2=\Big(\frac{j}{n}\Big)^2,\]
we have for all $x\in\configs_j$ and $0<j<n$:
\begin{align*}
    p^+(x)
    &=\frac{r}{1+(r-1)\zeta}\zeta(1-\zeta)
    =\frac{r\,\frac{j}{n}}{1+(r-1)\frac{j}{n}}\frac{n-j}{j}
    =p^+(j),\\
    p^-(x)
    &=\frac{1}{1+(r-1)\zeta}(1-\zeta)\zeta
    =\frac{\frac{j}{n}}{1+(r-1)\frac{j}{n}}\frac{n-j}{j}
    =p^-(j).
\end{align*}
Therefore the projection of this micSMP is just the classic Moran Process transitions given in (\ref{eq:ClassicMoran}). Clearly, this does not hold true anymore as soon as the selection policy is non-uniform or if it is also dependent on the initial distribution, compare section 4.



\section{Stationary selection}

The aim of this section is to show that selecting individuals via the stationary distribution $\pi$ of the stochastic weight matrix $W$ is the natural choice to retain Moran fixation probability. 

\begin{theo}[]
    \label{theo:stationarySelectionTheorem}
    Let $\langle(X_k)_{k\geq 0},W,\pi,r\rangle$ be a micSMP, as in Definition \ref{def:msp}.
    Then, for any initial distribution $\alpha$ concentrated on $\configs_j$
        \[\P(T_{fix}<\infty\given X_0\sim\alpha)=
            \begin{cases}
            \frac{i}{n}&r=1,\\
            \frac{1-1/r^i}{1-1/r^n}&r\neq 1.
    \end{cases}\]
\end{theo}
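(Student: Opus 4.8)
The plan is to reproduce the two martingale computations of the classic Moran Process, but to run them on the \emph{microscopic} chain $(X_k)_k$ rather than on the projection $(M_k)_k$, thereby never appealing to Markovianity of $(M_k)_k$. The engine of the whole argument is that the expressions (\ref{eq:p+}) and (\ref{eq:p-}) share the common factor $\zeta-xW_\pi x^\top$, so that for \emph{every} transient configuration $x$ one has the universal ratio
\[
    \frac{p^-(x)}{p^+(x)}=\frac1r,
\]
uniformly in $x$, and not merely for each fixed mutant count. This is exactly the identity that drove the classic computation, and it survives intact in the spatial setting.

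First I would treat the neutral case $r=1$. Here $p^+(x)=p^-(x)$ for all $x$, and since $M_k=X_k\vone^\top$ is a bounded function of $X_k$, the Markov property of $(X_k)_k$ gives
\[
    \EW[M_{k+1}\given\sigF_k]
    =\EW[M_{k+1}\given X_k]
    =M_k+p^+(X_k)-p^-(X_k)
    =M_k,
\]
so $(M_k)_k$ is a bounded $\sigF$-martingale. For $r\neq1$ I would instead consider $(r^{-M_k})_k$. Conditioning on $X_k$ and using that the increment $M_{k+1}-M_k$ takes only the values $+1,-1,0$ with probabilities $p^+(X_k),p^-(X_k),p_0(X_k)$, the conditional expectation factors as $r^{-M_k}$ times $1+p^+(X_k)(r^{-1}-1)+p^-(X_k)(r-1)$; substituting $p^-(X_k)=p^+(X_k)/r$ collapses the bracket to $1$, so $(r^{-M_k})_k$ is again a bounded $\sigF$-martingale. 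The decisive point, where the non-Markovianity of $(M_k)_k$ would otherwise bite, is that this cancellation uses only the pointwise ratio $p^-(x)/p^+(x)=1/r$ and never the transition law of $(M_k)_k$ itself.

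Next I would establish that the absorption time $T:=\inf\{k\geq0:X_k\in\configs_0\cup\configs_n\}$ is almost surely finite. As $\configs$ is finite and the only absorbing configurations are the two monochromatic ones, it suffices that from every transient $x$ the chain can strictly change its mutant count. By (\ref{eq:p+}) this amounts to $xW_\pi(\vone-x)^\top>0$, i.e.\ to the existence of a positively weighted edge from a mutant vertex to a wildtype vertex: were the mutant set to have no edge leaving it, it could never be exited, contradicting strong connectivity of $G$. Hence $p^+(x)>0$ for every transient $x$, which yields a positive-probability one-step transition to a configuration with one more mutant, and iterating produces a positive-probability path to $\configs_n$. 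A finite Markov chain whose every transient state reaches an absorbing state with positive probability is absorbed almost surely, so $T<\infty$ a.s.

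Finally, the Optional Stopping Theorem applied to the bounded martingale at the finite stopping time $T$ closes the computation exactly as in the classic case. Writing $\P(T_{fix}<\infty)$ for the probability of absorption at $\vone$ and using that $M_0=i$ almost surely under an $\alpha$ concentrated on configurations with $i$ mutants, the identity $\EW[M_0]=\EW[M_T]$ gives $i=n\,\P(T_{fix}<\infty)$ when $r=1$, while $\EW[r^{-M_0}]=\EW[r^{-M_T}]$ yields $r^{-i}=\P(T_{fix}<\infty)\,r^{-n}+(1-\P(T_{fix}<\infty))$ when $r\neq1$, and rearranging produces the two claimed formulas. I expect the only genuinely delicate step to be the martingale verification for $r\neq1$: one must keep the conditional expectation with respect to $\sigF_k$ (where the increment law is governed by the full configuration $X_k$, not by $M_k$) and check that the $x$-dependent factor $\zeta-xW_\pi x^\top$ cancels cleanly, since it is precisely this cancellation that rescues the argument despite $(M_k)_k$ failing to be Markov.
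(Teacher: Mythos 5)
Your proposal is correct and takes essentially the same route as the paper's own proof: the same two martingales $(M_k)_k$ and $(r^{-M_k})_k$ with respect to the filtration $\sigF$ of the microscopic chain, driven by the pointwise ratio $p^-(x)/p^+(x)=1/r$ from (\ref{eq:p+})--(\ref{eq:p-}), followed by the Optional Stopping Theorem at the absorption time $T$. If anything, you are more careful than the paper on one point: you derive the almost-sure finiteness of $T$ from strong connectivity (a positively weighted edge must leave any nonempty proper mutant set, so $p^+(x)>0$ on transient configurations), whereas the paper asserts this step without argument.
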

\begin{proof}
We construct a particular martingale and use the Optional Stopping Theorem. We first note that $0\leq |M_k|\leq n$ for any time, so that $(M_k)_k$ is integrable. Define 
\[\Delta_k:=M_{k+1}-M_k.\]
By construction, $\Delta_k(\omega)\in\{-1,0,1\}$. We get by the Markov Property using (\ref{eq:p+}) and (\ref{eq:p-})
\begin{align}
    \EW[\Delta_k|\sigF_k]
    &=\EW[\Delta_k|X_k]
    =p^+(X_k)-p^-(X_k)\notag\\
    &=\frac{1}{1+(r-1)\zeta_k}\Big(X_k(rW_\pi-W_\pi^\top)\vone^\top+(r-1)X_kW_\pi X_k^\top\Big).
\end{align}
We have further
    \begin{align}
        X_k(rW_\pi-W_\pi^\top)\vone^\top
        &=X_k(r\diag(\pi)W\vone^\top-(\pi W)^\top)\notag\\
        &=X_k(r\pi^\top-\pi^\top)
        =(r-1)X_k\pi^\top
        =(r-1)\zeta_k.
    \end{align}
Thus
\begin{equation}
    \EW[\Delta_k|X_k]=\frac{r-1}{1+(r-1)\zeta_k}(\zeta_k-X_kW_\pi X_k^\top).
\end{equation}
\paragraph*{Neutral selection:} If $r=1$, we have $\EW[\Delta_k|X_k]=0$. Therefore $(M_k)_k$ is a $\sigF$-martingale, i.e.
\begin{equation}
    \EW[M_{k+1}\given \sigF_k]
    =\EW[\Delta_k+M_k\given \sigF_k]=M_k.
\end{equation}
Define the $\sigF$-stopping time
    \begin{equation}
        T:=\inf\{k\geq0:M_k\in\{0,n\}\}.
    \end{equation}
Since $T$ is the absorption time in a finite Markov Chain, where all transitive states communicate with positive probability, it is almost surely finite. Now we can apply the Optional Stopping Theorem, and get $i=\EW[M_0]=\EW[M_T]$. This implies
\begin{align}
    i
    &=\EW_\alpha[M_T]
    =\P(T_{fix}<\infty\given X_0\sim\alpha)\cdot n+(1-\P(T_{fix}<\infty)\cdot 0
    =\P(T_{fix}<\infty\given X_0\sim\alpha)\cdot n,
\end{align}
and thus $\P(T_{fix}\leq\infty\given X_0\sim\alpha)=\frac{i}{n}$.

\paragraph*{Non-neutral selection:} For $r\neq 1$ we note, that for all configurations $x\in\configs\setminus\{\configs_0\cup\configs_n\}$
\begin{equation}
    \frac{p^-(x)}{p^+(x)}
    =\frac{1}{r},
\end{equation}
compare (\ref{eq:p+}-\ref{eq:p-}). Define the process
    \begin{equation}
        N_k:=r^{-M_k},\qquad k\in\N,
    \end{equation}
where $N_0=r^{-i}$ a.s. Again, $(N_k)_k$ is $\sigF$-adapted and bounded since $M_k$ is bounded, and thus integrable. The process $(N_k)_k$ is indeed a $\sigF$-martingale
\begin{align}
    \EW[r^{-M_{k+1}}\given \sigF_k]
    &=\EW[r^{-M_k}r^{-\Delta_k}\given \sigF_k]
    =r^{-M_k}\EW[r^{-\Delta_k}\given X_k]\\
    &=r^{-M_k}\Big(r^{1}p^-(X_k)+r^0\,(1-p^+(X_k)-p^-(X_k))+r^{-1}p^+(X_k)\Big)
    =r^{-M_k},
\end{align}
i.e. $\EW[N_{k+1}\given \sigF_k]=N_k$.
Applying the Optional Stopping Theorem for $T$ and $(N_k)_k$ delivers
\begin{align}
    r^{-i}
    =\EW_\alpha[N_0]
    =\EW_\alpha[N_T]
    =r^0(1-\P(T_{fix}<\infty\given X_0\sim\alpha))+r^{-n}\P(T_{fix}<\infty\given X_0\sim\alpha).
\end{align}
Rearranging gives 
\begin{equation}
    \P(T_{fix}<\infty\given X_0\sim\alpha)
    =\frac{1-1/r^i}{1-1/r^n},
\end{equation}
as desired.
\end{proof}
\bigskip

\begin{cor}
    A micSMP under stationary selection has the same fixation probability as the classic Moran Process.
\end{cor}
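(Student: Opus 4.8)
The plan is to read this corollary off directly from Theorem~\ref{theo:stationarySelectionTheorem}, which already carries all the analytic weight. The only genuine task is to check that the phrase ``stationary selection'' coincides with the setting of that theorem, and that its conclusion matches the Moran fixation probabilities $(\varrho_i)_i$ recorded in~(\ref{eq:MoranFixationProb}).

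First I would recall that in Definition~\ref{def:msp} the selection policy is \emph{declared} to be $\pi$, the unique strictly positive normed left-eigenvector of $W$ with $\pi W=\pi$. Hence ``stationary selection'' is not an additional hypothesis but precisely the defining feature of a micSMP: the vertex chosen for procreation is drawn according to the stationary distribution of the weight matrix, weighted by fitness. Consequently Theorem~\ref{theo:stationarySelectionTheorem} applies verbatim, with no further conditions imposed on $W$ --- in particular, $W$ need \emph{not} be bistochastic as in Theorem~\ref{theo:IsothermalTheorem}.

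Next, fix any transient configuration with $i$ mutants, $0<i<n$, and take $\alpha$ to be any initial distribution supported on $\configs_i$ (for instance a point mass at a single such configuration). Theorem~\ref{theo:stationarySelectionTheorem} then gives
\[
    \P(T_{fix}<\infty\given X_0\sim\alpha)
    =\begin{cases}
        \frac{i}{n} & r=1,\\
        \frac{1-1/r^i}{1-1/r^n} & r\neq 1,
    \end{cases}
\]
which is exactly the expression defining $\varrho_i$ in~(\ref{eq:MoranFixationProb}). Since $(\varrho_i)_i$ are by construction the fixation probabilities of the classic one-dimensional Moran Process, the two processes share the same fixation probability and the claim follows.

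The point worth emphasising --- rather than a real obstacle --- is that the right-hand side depends only on the number $i$ of mutants initially present, and neither on which vertices they occupy nor on the graph $W$ beyond the standing assumption of strong connectivity. This insensitivity to the spatial detail is exactly what the phrase ``the same fixation probability as the classic Moran Process'' is meant to capture, and it is already built into the martingale argument behind Theorem~\ref{theo:stationarySelectionTheorem}: the martingales $(M_k)_k$ and $(r^{-M_k})_k$ were designed so that the Optional Stopping Theorem sees only the starting count $i$ together with the absorbing values $0$ and $n$, collapsing all configurational information.
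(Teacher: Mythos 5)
Your proposal is correct and matches the paper's treatment: the paper offers no separate proof, regarding the corollary as an immediate consequence of Theorem~\ref{theo:stationarySelectionTheorem}, and your argument simply makes that deduction explicit by checking that ``stationary selection'' is the defining setting of Definition~\ref{def:msp} and that the theorem's conclusion coincides with the Moran fixation probabilities $(\varrho_i)_i$ of~(\ref{eq:MoranFixationProb}).
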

\bigskip

\begin{cor}
    The Isothermal Theorem \ref{theo:IsothermalTheorem} is now a direct consequence of Theorem \ref{theo:stationarySelectionTheorem}.
\end{cor}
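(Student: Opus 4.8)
The plan is to verify that the hypotheses of the Isothermal Theorem are a special case of those of Theorem \ref{theo:stationarySelectionTheorem}, so that the corollary follows by direct substitution rather than by any new argument. The only additional assumptions in Theorem \ref{theo:IsothermalTheorem} are that $W$ is bistochastic and that selection is uniform; I would show that these together force the process to be exactly a micSMP in the sense of Definition \ref{def:msp} with $\pi=n^{-1}\vone$, and then invoke Theorem \ref{theo:stationarySelectionTheorem} verbatim.

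First I would confirm that bistochasticity pins down the selection policy. Since $W$ is bistochastic, its column sums equal one, i.e. $\vone W=\vone$, so $n^{-1}\vone$ is a normed left-eigenvector of $W$ for eigenvalue $1$. Strong connectivity of $G$ is part of the micSMP data, so the Perron--Frobenius theorem guarantees that the normed strictly positive left-eigenvector is unique; hence $\pi=n^{-1}\vone$. This is exactly the remark made after the definition of isothermality, namely that an isothermal (equivalently bistochastic) $W$ has uniform invariant measure.

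Next I would check that ``uniform selection'' is literally stationary selection in this setting. In Definition \ref{def:msp} a mutant vertex $v$ is chosen with probability $\frac{r\pi(v)}{1+(r-1)\zeta_k}$ and a wildtype vertex with $\frac{\pi(v)}{1+(r-1)\zeta_k}$; substituting $\pi(v)=1/n$ turns these into the uniform, fitness-weighted probabilities assumed in Theorem \ref{theo:IsothermalTheorem}. Thus the process of the Isothermal Theorem is a micSMP $\langle(X_k)_k,W,n^{-1}\vone,r\rangle$ satisfying all the standing assumptions of Definition \ref{def:msp}, and in particular the conclusions of Theorem \ref{theo:stationarySelectionTheorem} apply to it without further hypotheses.

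Finally I would apply Theorem \ref{theo:stationarySelectionTheorem} to this micSMP with $\alpha$ concentrated on the prescribed transient configuration in $\configs_i$. This returns $i/n$ for $r=1$ and $\frac{1-1/r^i}{1-1/r^n}$ for $r\neq1$, i.e. exactly the Moran fixation probability $\varrho_i$ of (\ref{eq:MoranFixationProb}), which establishes the corollary. There is essentially no hard step here: the only points needing care are the eigenvector uniqueness, which rests on the strong-connectivity assumption already built into every micSMP, and the matching of initial conditions, which is immediate since both theorems start from a configuration with exactly $i$ mutants.
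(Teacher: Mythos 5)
Your proposal is correct and follows essentially the same route as the paper, which treats the Isothermal Theorem as the special case of Theorem \ref{theo:stationarySelectionTheorem} in which the bistochasticity of $W$ (noted in the paper to be equivalent to isothermality) forces the unique stationary distribution to be $\pi=n^{-1}\vone$, so that uniform selection is stationary selection. Your added care about eigenvector uniqueness via strong connectivity, and your observation that the exponent in the paper's stated denominator should read $1-1/r^n$, are both consistent with the paper's intent.
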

\bigskip

\subsection*{A remark on the choice of selection policy}
Instead of selecting the next individual with the stationary probability distribution $\pi$, we can also choose with any probability distribution $\mu$ on $\{1,2,\ldots,n\}$. In light of Theorem \ref{theo:stationarySelectionTheorem}, we loose the constancy of the ratio between increase and decrease probabilities $p^+$ and $p^-$, as shown in the following theorem.

\begin{theo}[]
    \label{theo:stationarySelectionNoAlternative}
    In a micSMP $\langle (X_k)_k,W,\mu,r\rangle$ with arbitrary selection policy $\mu$, we have for any configuration $x$
    \begin{align*}
        p^+_\mu(x)
        =\frac{r}{1+(r-1)x\mu^\top}(xW_\mu\vone^\top-xW_\mu x^\top),\\
        p^-_\mu(x)
        =\frac{1}{1+(r-1)x\mu^\top}(xW_\mu^\top\vone^\top-xW_\mu x).
    \end{align*}
    Further, $p^-/p^+$ is a constant function if
    if and only if $\mu=\pi$, where $\pi W=\pi$. In that case
    \[
        \frac{p^-_\mu(x)}{p^+_\mu(x)}
        =\frac{1}{r}.
    \]
\end{theo}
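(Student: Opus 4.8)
The plan is to first derive the two formulas by a one-step conditioning argument, and then to prove the two implications of the equivalence separately, the converse being the substantive part. For the formulas I would condition on the ordered pair $(v,v')$ produced by a single update: with $\zeta=x\mu^\top$ the probability of selecting a mutant, the mutant count rises by one precisely when the procreating vertex $v$ is labelled $1$ and its target $v'$ is labelled $0$, and falls by one in the mirror case. Summing the selection probability $\tfrac{r\mu(v)}{1+(r-1)\zeta}$ against $W(v,\cdot)$ over mutant $v$ and wildtype $v'$, and encoding the events $\{x_v=1\}$ and $\{x_{v'}=0\}$ by the vectors $x$ and $\vone-x$, gives
\[
p^+_\mu(x)=\frac{r}{1+(r-1)\zeta}\,xW_\mu(\vone-x)^\top,\qquad p^-_\mu(x)=\frac{1}{1+(r-1)\zeta}\,(\vone-x)W_\mu x^\top,
\]
with $W_\mu:=\diag(\mu)W$. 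Expanding the products and using that a scalar equals its transpose, $(\vone-x)W_\mu x^\top=xW_\mu^\top\vone^\top-xW_\mu x^\top$, recovers the stated expressions; this step is routine bookkeeping.

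For the implication $\mu=\pi\Rightarrow$ constant ratio, I would substitute $\mu=\pi$ and invoke the two identities $W_\pi\vone^\top=\diag(\pi)\vone^\top=\pi^\top$ (stochasticity of $W$) and $W_\pi^\top\vone^\top=W^\top\pi^\top=(\pi W)^\top=\pi^\top$ (invariance $\pi W=\pi$). Both cross terms $xW_\pi\vone^\top$ and $xW_\pi^\top\vone^\top$ then collapse to $x\pi^\top=\zeta$, so numerator and denominator share the common factor $\zeta-xW_\pi x^\top$ and the ratio reduces to the constant $1/r$, exactly as in (\ref{eq:p+})--(\ref{eq:p-}).

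The converse carries the actual content. Set $f(x):=(\vone-x)W_\mu x^\top$ and $g(x):=xW_\mu(\vone-x)^\top$, so that $p^-_\mu=c\,p^+_\mu$ on all transient configurations is equivalent to $f(x)=rc\,g(x)$ whenever $0<x\vone^\top<n$; this multiplicative form sidesteps dividing by a possibly vanishing $p^+_\mu$. The key observation is the complementation symmetry $f(\vone-x)=g(x)$ and $g(\vone-x)=f(x)$, valid because exchanging the roles of mutants and wildtypes swaps the two weighted edge-sums. Since $\vone-x$ is transient whenever $x$ is, applying the hypothesis to $\vone-x$ also yields $g(x)=rc\,f(x)$, whence $f(x)=(rc)^2f(x)$ on every transient configuration. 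Strong connectivity together with $\mu$ being a probability vector produces a single-mutant configuration $e_b$ with $f(e_b)>0$ (pick $a$ with $\mu_a>0$ and an edge $a\to b$, so $f(e_b)\ge\mu_aW(a,b)>0$), which forces $(rc)^2=1$, i.e. $c=1/r$ and $f\equiv g$ on the transient set. Evaluating the resulting identity $xW_\mu^\top\vone^\top=xW_\mu\vone^\top$ at the single-mutant configurations $x=e_i$ gives $(\mu W)_i=\mu_i$ for every $i$; hence $\mu W=\mu$, and uniqueness of the positive normalised left eigenvector forces $\mu=\pi$.

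I expect the single real obstacle to be pinning down the value of the constant in the converse: everything hinges on the complementation symmetry, which is exactly what forces the constant to equal $1/r$ and reduces the identity $f\equiv g$ to the eigenvector equation $\mu W=\mu$. Without exploiting this symmetry one is left only with a one-parameter family of linear relations among the weights that does not by itself isolate $\pi$. A minor technical point is to guarantee a transient configuration on which $f$ does not vanish, which is where strong connectivity is used.
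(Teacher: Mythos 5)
Your proof is correct, and its substantive part (the converse) takes a genuinely different --- and in fact more complete --- route than the paper's. The paper derives the same two formulas, then rewrites the ratio as
\[
\frac{p^-_\mu(x)}{p^+_\mu(x)}=\frac{1}{r}\left(1+\frac{x(W_\mu-W_\mu^\top)\vone^\top}{xW_\mu(\vone-x)^\top}\right),
\]
and argues that this equals the \emph{specific} constant $1/r$ for all $x$ if and only if $(W_\mu-W_\mu^\top)\vone^\top=0^\top$, which it then unwinds to $\mu W=\mu$. That argument never addresses the possibility that the ratio is constant with some value $c\neq 1/r$, so strictly speaking the paper proves less than the stated claim that constancy alone forces $\mu=\pi$. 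Your complementation symmetry $f(\vone-x)=g(x)$, $g(\vone-x)=f(x)$ is exactly the ingredient that closes this: it turns the hypothesis $f=rc\,g$ on the transient set into $f=(rc)^2 f$, and your strong-connectivity argument producing $f(e_b)>0$ then pins down $(rc)^2=1$; you should just state explicitly that $c\geq 0$ (it is a ratio of probabilities) to exclude $rc=-1$, after which $c=1/r$ and your evaluation of $f\equiv g$ at the single-mutant configurations $e_i$ recovers the same linear identity $\mu W=\mu$ that the paper reaches, with Perron--Frobenius uniqueness finishing the job. Your multiplicative formulation $p^-_\mu=c\,p^+_\mu$ also sidesteps division by a possibly vanishing $p^+_\mu$, which the paper's ratio formula silently assumes away. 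The formula derivation and the direction $\mu=\pi\Rightarrow p^-/p^+\equiv 1/r$ are essentially identical in both treatments. What the paper's route buys is brevity: once the constant is taken to be $1/r$, a single algebraic identity does all the work. What your route buys is that it proves the theorem as actually stated, for an arbitrary constant.
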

\bigskip

\begin{proof}
Replacing $\pi$ in (\ref{eq:p+}-\ref{eq:p-}) with $\mu$, we get for the increase and decrease probabilities (adapting $\zeta=x\mu^\top$ and $W_\mu=\diag(\mu)W$)
\begin{align*}
    p^+_\mu(x)
    &=\frac{r}{1+(r-1)\zeta}x W_\mu (\vone-x)^\top
    =\frac{r}{1+(r-1)\zeta}(xW_\mu\vone^\top-xW_\mu x^\top),\\
    p^-_\mu(x)
    &=\frac{1}{1+(r-1)\zeta}(\vone-x)W_\mu x^\top
    =\frac{1}{1+(r-1)\zeta}(xW_\mu^\top\vone^\top-xW_\mu x^\top).
\end{align*}
Note, that with arbitrary selection policy $\mu$, we still have $X_kW_\mu\vone^\top=\zeta_k$ in $p^+_\mu$. But since $\mu$ is no longer stationary, we can not simplify $X_kW_\mu^\top\vone^\top$ in the same way in $p^-_\mu$.

Now, for arbitrary selection policy $\mu$ the ratio of decrease and increase probability can be expressed as
\begin{equation}
    \frac{p^-_\mu(X_k)}{p^+_\mu(X_k)}
    =\frac{1}{r}\left(1+\frac{X_k(W_\mu-W_\mu^\top)\vone^\top}{X_k W_\mu (\vone-X_k)^\top }\right).
    \label{eq:ratioConst}
\end{equation}
This expression is equal to $1/r$ independently of $X_k$, if and only if 
\[(W_\mu-W_\mu^\top)\vone^\top=0^\top,\]
where $0^\top$ denotes the vector that has only zeros.
This is equivalent to 
\begin{align*}
    0^\top
    &=\diag(\mu)W\vone^\top-W^\top\diag(\mu)\vone^\top
    =\mu^\top-W^\top\mu^\top,
\end{align*}
and thus becomes the condition $\mu W=\mu$ for stationarity of $\mu$ with respect to $W$. This implies that the only choice for $\mu$ is the stationary distribution of $W$ to make (\ref{eq:ratioConst}) constant. In that case
\[
    \frac{p^-_\mu(x)}{p^+_\mu(x)}=\frac{1}{r}.
\]
\end{proof}

Nevertheless, Theorem \ref{theo:stationarySelectionNoAlternative} does not exhaust all the choices of initial condition and selection policies to have Moran fixation probability $\varrho_i$ in a micSMP, see next section. 

\section{Study of fixation probability for small population size}

Theorem \ref{theo:stationarySelectionTheorem} gives sufficient assumptions for a micSMP to have Moran fixation, but it does say nothing about the necessity. To understand this at least for small population size, we compute the general fixation probability explicitly for small populations and explore parameter choices such that Moran fixation holds.

\subsection{The case $n=2$}
For two individuals, the parameter choice is limited, thus it is sufficient to define a weight matrix (with the graph)
    \begin{equation}
    W=\begin{pmatrix}
        1-w_1 & w_1\\
        w_2 & 1-w_2
    \end{pmatrix},
    \qquad
    \begin{tikzpicture}
        \tikzset{
            every node/.style={font=\sffamily\tiny, fill =white},
            every edge/.style={->,bend left = 20, draw}
        }
        
        \node (1) at (-1,0) {1};
        \node (2) at (1,0) {2};

          
        \path (1) edge node {$w_1$} (2);
        \path (2) edge node {$w_1$} (1);
        
        \draw[->] (1) to[loop, out =  160, in = 200, looseness=5] node[draw=none, left, outer sep = 2pt] {$1-w_1$} (1);

        \draw[->] (2) to[loop, out =  -20, in = 20, looseness=5] node[draw=none, right, outer sep = 2pt] {$1-w_2$} (2);
        
    \end{tikzpicture}
    \label{eq:weightMatrix}
    \end{equation}
with $w_1,w_2\in(0,1]$, initial condition $\alpha=(a,1-a)$, $a\in[0,1]$, fitness parameter $r>0$ and selection policy $\mu=(m,1-m)$, $m\in[0,1]$. We do not assume stationarity of $\mu$ with respect to $W$.

The state space of this micSMP is $\{(0,0),(0,1),(1,0),(1,1)\}=:\{\twoG{white}{white},\twoG{white}{black},\twoG{black}{white},\twoG{black}{black}\}$. With this encoding of the states, we give transition graph and transition matrix in figure \ref{fig:TransitionGraphMatrixN2}.

\begin{figure}
\begin{center}
    \begin{tikzpicture}
        \tikzset{every path/.style = {->, line width = 1pt, outer sep = 4pt, draw}};
        
        \node (00) at (-2, 0.0) {\twoG[2]{white}{white}};
        \node (01) at ( 0, 1.5) {\twoG[2]{white}{black}};
        \node (10) at ( 0,-1.5) {\twoG[2]{black}{white}};
        \node (11) at ( 2, 0.0) {\twoG[2]{black}{black}};

        \path (01) -- (00);
        \path (01) -- (11);
        \path (10) -- (00);
        \path (10) -- (11);
        
    \end{tikzpicture}
\end{center}

\begin{center}
    \begin{tabular}{c|c|cc|c}
            & \twoG{white}{white} & \twoG{white}{black} & \twoG{black}{white} & \twoG{black}{black}\\\hline
      \twoG{white}{white} & 1     & 0     & 0     & 0    \\\hline
      \twoG{white}{black} & $\frac{\mu w_1}{1+(r-1)(1-\mu)}$&$1-\frac{\mu w_1+r(1-\mu)w_2}{1+(r-1)(1-\mu)}$&0&$\frac{r(1-\mu)w_2}{1+(r-1)(1-\mu)}$                          \\
      \twoG{black}{white} & $\frac{(1-\mu)w_2}{1+(r-1)\mu}$ &0&$1-\frac{(1-\mu)w_2+r\mu w_1}{1+(r-1)\mu}$&$\frac{r\mu w_1}{1+(r-1)\mu}$                     \\\hline
      \twoG{black}{black} & 0     & 0     & 0     & 1
    \end{tabular}
\end{center}
    \caption{Transition graph and transition matrix. For simplicity, the transition probabilities are not given in the transition graph and loops are omitted.}
    \label{fig:TransitionGraphMatrixN2}
\end{figure}

The following proposition shows that stationary selection is not the only parameter choice for Moran fixation. Indeed, stationary selection implies Moran fixation for any initial distribution $\alpha$. The other solution constructs a selection policy (non-stationary), where the initial distribution $\alpha$ is fixed.

\begin{prop}\label{prop:moranfixation}
    Let a micSMP $\langle(X_k)_k,W,\mu,r \rangle$ with $W$ as defined in (\ref{eq:weightMatrix}) with $\mu=(m,1-m)$ and $\alpha=(a,1-a)$, where $a,m\in[0,1]$. Let $c:=w_1/w_2$. Then, Moran fixation holds, if
        \begin{itemize}
            \item $\mu$ is stationary with respect to $W$, independent of the choice of $a$.
            \item if $c$ and $r$ are not simultaneously equal to one and 
            \[
                m
                =\frac{a\,(r+1)-r}{a\,(r+1)(1-c)+c-r}, 
                \qquad a\in\Big[\frac{\min(1,r)}{r+1},\frac{\max(1,r)}{r+1}\Big].
            \]
        \end{itemize}
\end{prop}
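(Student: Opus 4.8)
The plan is to use the trivial absorption structure of the $n=2$ chain and then reduce the Moran-fixation requirement to a single polynomial identity. First I would read off Figure~\ref{fig:TransitionGraphMatrixN2}: the two transient one-mutant configurations $\twoG{white}{black}=(0,1)$ and $\twoG{black}{white}=(1,0)$ have \emph{no} transition between them; from either state the chain only stays put or jumps to an absorbing state, and an absorbing state is reached in one step with positive probability, so absorption is almost sure. Consequently the fixation probability started from a fixed transient state is the elementary ratio $\rho=p^+/(p^++p^-)$ of that state's increase/decrease entries. With $\mu=(m,1-m)$ and $c=w_1/w_2$, dividing each numerator and denominator by $w_2$ gives
\[
\rho_{(0,1)}=\frac{r(1-m)}{r(1-m)+mc},\qquad
\rho_{(1,0)}=\frac{rmc}{rmc+(1-m)}.
\]
Since $\alpha=(a,1-a)$ places mass $a$ on $(0,1)$, the total fixation probability is $\rho=a\,\rho_{(0,1)}+(1-a)\,\rho_{(1,0)}$, and Moran fixation is the single scalar equation $\rho=\frac{r}{r+1}=\varrho_1$, the value of (\ref{eq:MoranFixationProb}) for $n=2$ (also $\tfrac12$ when $r=1$).

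The decisive step I would take is the reparametrisation $t:=\frac{mc}{1-m}$, which increases monotonically from $0$ to $+\infty$ as $m$ runs over $[0,1]$ and collapses the two ratios to $\rho_{(0,1)}=\frac{r}{r+t}$ and $\rho_{(1,0)}=\frac{rt}{1+rt}$. Substituting into $\rho=\frac{r}{r+1}$ and clearing the denominators $(r+t)$, $(1+rt)$ and $(r+1)$ yields, after collecting powers of $t$, the quadratic
\[
\bigl(1-a(r+1)\bigr)t^2+(r-1)\,t+\bigl(a(r+1)-r\bigr)=0.
\]
The point is that $t=1$ is a root for every $a$; since $t=1$ means $mc=1-m$, i.e. $m=\frac{w_2}{w_1+w_2}$, this is exactly $\mu=\pi$, the stationary policy, and it delivers Moran fixation for any $a$. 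This settles the first bullet and recovers Theorem~\ref{theo:stationarySelectionTheorem} for $n=2$.

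Dividing the quadratic by $(t-1)$ leaves the linear factor $\bigl(1-a(r+1)\bigr)t+\bigl(r-a(r+1)\bigr)=0$ with root $t=\frac{a(r+1)-r}{1-a(r+1)}$. Back-substituting through $m=\frac{t}{c+t}$ and simplifying gives precisely
\[
m=\frac{a(r+1)-r}{a(r+1)(1-c)+c-r},
\]
whose denominator fails to vanish identically exactly when $r$ and $c$ are not both equal to $1$. To obtain the admissible range for $a$ I would use that $m\in[0,1]$ is equivalent to $t\in[0,+\infty]$, hence to requiring the root $\frac{a(r+1)-r}{1-a(r+1)}$ to be nonnegative, i.e. numerator and denominator of the same sign. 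A short split on the sign of $r-1$ shows this holds exactly for $a(r+1)\in[\min(1,r),\max(1,r)]$, that is $a\in\bigl[\frac{\min(1,r)}{r+1},\frac{\max(1,r)}{r+1}\bigr]$, with endpoints $a=\frac{r}{r+1}$ and $a=\frac{1}{r+1}$ mapping to $m=0$ and $m=1$.

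The computations are all routine once the substitution is in place; the only genuine care points are bookkeeping — matching each transient configuration to the correct row of the transition matrix, hence to the correct $\rho$ — and the degenerate boundary behaviour where the denominator of $m$ vanishes (the excluded $r=c=1$ case, and the $r=1$ case where the $a$-range degenerates to the single point $\tfrac12$ and only stationary selection survives). The one non-mechanical idea is the substitution $t=mc/(1-m)$, which turns an unpleasant rational equation into a quadratic whose root $t=1$ transparently encodes the stationary solution.
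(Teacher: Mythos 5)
Your proof is correct, and its skeleton is the same as the paper's: write down the explicit fixation probability of the four-state chain and solve the single scalar equation $\rho=\varrho_1=r/(r+1)$. The paper arrives at the identical expression (\ref{eq:surfaceAbsProb}) by inverting the (diagonal, since the two transient states do not communicate) fundamental matrix, and then treats the two bullets separately: the stationary case by plugging in $m=(c+1)^{-1}$ (equivalently, by citing Theorem~\ref{theo:stationarySelectionTheorem}), and the non-stationary case with only the remark that ``rearranging for $m$ gives the desired result''. Where your write-up genuinely differs is the algebraic organisation: the substitution $t=mc/(1-m)$ turns the Moran-fixation condition into the quadratic
\begin{equation*}
\bigl(1-a(r+1)\bigr)t^2+(r-1)\,t+\bigl(a(r+1)-r\bigr)=0,
\end{equation*}
whose root $t=1$ is exactly stationary selection and whose cofactor produces the second bullet. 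This buys two things the paper does not supply: the two bullets appear as the two roots of one polynomial rather than as unrelated verifications, and the admissible interval $a(r+1)\in[\min(1,r),\max(1,r)]$ is actually derived (from $t\in[0,\infty]$ being equivalent to $m\in[0,1]$) instead of merely asserted. Your more elementary derivation of the per-state fixation probabilities as $p^+/(p^++p^-)$ (geometric argument, valid because neither transient state can reach the other) is an equally legitimate replacement for the matrix inversion.

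One small correction to your closing remark: for $r=1$, $c\neq 1$ and $a=\tfrac12$, the quadratic degenerates to $(1-2a)(t^2-1)=0$, which holds identically in $t$; so at that point \emph{every} selection policy $m$ yields Moran fixation, not only the stationary one. (For $r=1$ and $a\neq\tfrac12$ it is true that only the stationary root $t=1$ survives.) This does not affect the validity of your argument; it only sharpens the description of the degenerate case, which the proposition itself leaves implicit since its displayed formula for $m$ becomes $0/0$ there.
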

\bigskip

\begin{proof}
Since the transition graph is simple, we compute the fixation probability for arbitrary parameters:
\begin{align}
    \P(T_{fix}<\infty\given X_0\sim \alpha)
    &=(a,1-a)\begin{pmatrix}
        \frac{m w_1+r(1-m)w_2}{1+(r-1)(1-m)}&0\\
        0&\frac{(1-m)w_2+r m w_1}{1+(r-1)m}
    \end{pmatrix}^{-1}
    \begin{pmatrix}
        \frac{r(1-m)w_2}{1+(r-1)(1-m)}\\
        \frac{rm w_1}{1+(r-1)m}
    \end{pmatrix}\notag\\
    &=\frac{ra\,(1-m)}{mc+r(1-m)}+\frac{rm\,(1-a)}{(1-m)\frac{1}{c}+rm}
    .\label{eq:surfaceAbsProb}
\end{align}
The first part of the proposition is a direct consequence of Theorem \ref{theo:stationarySelectionTheorem}. In this case the unique left-eigenvalue $\pi$ of $W$ is 
    \[\pi=\Big(\frac{1}{c+1},\frac{c}{c+1}\Big).\]
Plugging $m=(c+1)^{-1}$ into (\ref{eq:surfaceAbsProb}) confirms
    \begin{align*}
        \P(T_{fix}<\infty\given X_0\sim \alpha)
        &=\frac{ra\,(1-m)}{mc+r(1-m)}+\frac{rm\,(1-a)}{(1-m)\frac{1}{c}+rm}\\
        &=\frac{ra}{1+r}+\frac{r\,(1-a)}{1+r}
        =\frac{r}{r+1}
        =\frac{1-r^{-1}}{1-r^{-2}}
        =\varrho_1.
    \end{align*}
For the second part, we note that Moran fixation holds if
\[
1=\frac{\P(T_{fix}<\infty\given X_0\sim \hat\alpha)}{\varrho_1}
    =(r+1)\Big(\frac{a\,(1-m)}{mc+r\,(1-m)}+\frac{(1-a)\,m}{(1-m)/c+rm}\Big).
\]
Rearranging for $m$ gives the desired result.
\end{proof}

\begin{figure}
    \centering
    \includegraphics[width=\textwidth]{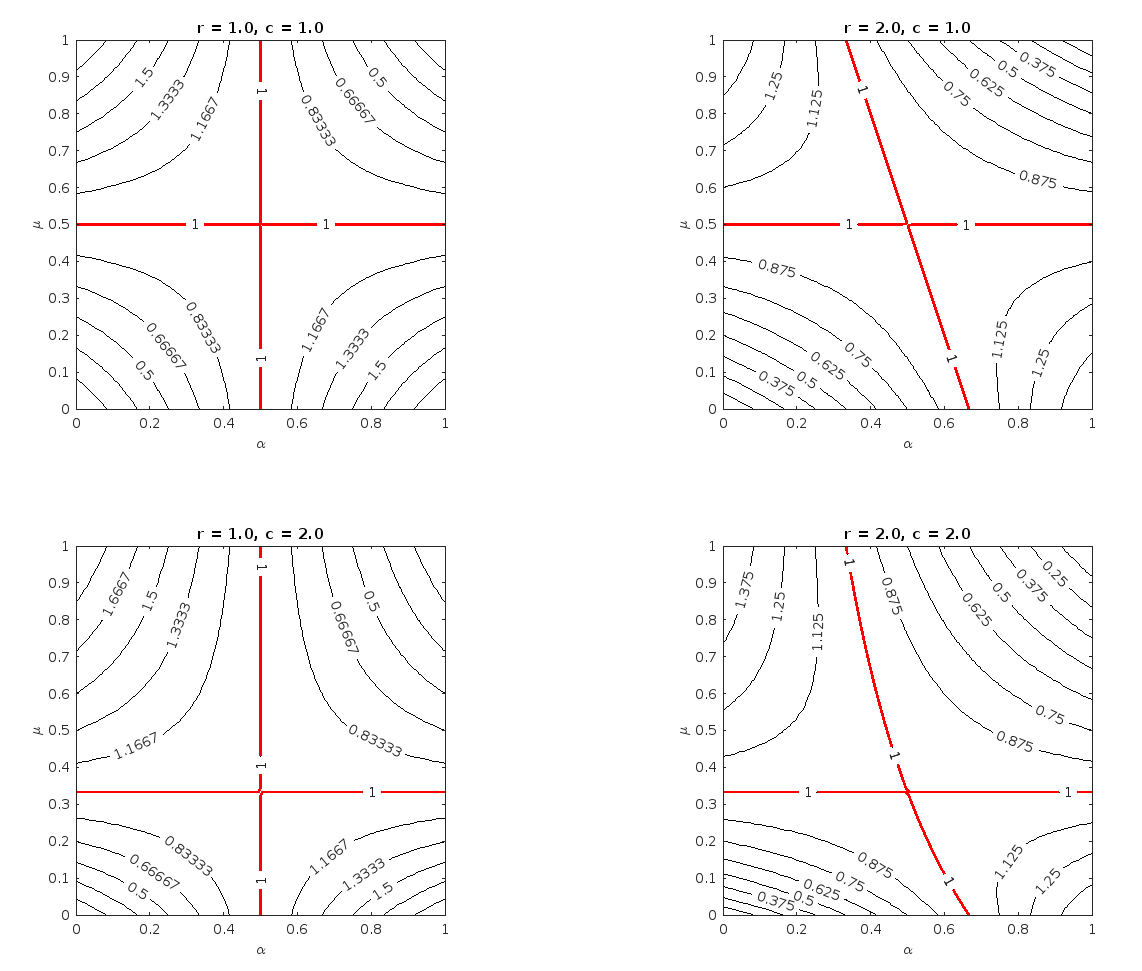}
    
    \caption{Plot of the function $F$ from Corollary \ref{cor:Fsym} parameterised by initial distribution $\alpha=(a, 1-a)$ and selection policy $\mu=(m,1-m)$. The red isolines with value 1 encode the possible choices for $(a,m)$ such that Moran fixation holds. We further chose four different value sets for the parameters $r$ and $c$.}
    \label{fig:absortpionProbs}
\end{figure}

The ratio between the general fixation probability and the Moran fixation probability shows some symmetries, which are usefull to explore the parameter space.

\begin{cor}\label{cor:Fsym}
    For a micSMP in the setting of Proposition \ref{prop:moranfixation}, we define
        \[F(m,a\given c,r):=\frac{\P(T_{fix}<\infty\given X_0\sim \hat\alpha)}{\varrho_1}.\]
    Then, $F$ has the following symmetries, i.e.
    \[F(m,a\given c,r)
    =F\Big(m, 1-a\,\Big|\, c, \frac{1}{r}\Big)
    =F\Big(1-m,a\,\Big|\,\frac{1}{c},r\Big).\]
\end{cor}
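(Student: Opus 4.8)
The plan is to verify the two claimed symmetries directly from the explicit formula for the fixation probability established in Proposition \ref{prop:moranfixation}, equation (\ref{eq:surfaceAbsProb}). Since $F(m,a\given c,r)=\P(T_{fix}<\infty\given X_0\sim\hat\alpha)/\varrho_1$ and $\varrho_1=r/(r+1)$ is an explicit scalar, the identities reduce to algebraic rearrangements of the two-term sum
\[
\P(T_{fix}<\infty\given X_0\sim\alpha)
=\frac{ra\,(1-m)}{mc+r(1-m)}+\frac{rm\,(1-a)}{(1-m)/c+rm}.
\]
First I would substitute $(a,r)\mapsto(1-a,1/r)$ into both the sum and the normaliser $\varrho_1=r/(r+1)$, clear the denominators in each of the two fractions, and check that after simplification the whole quotient returns to $F(m,a\given c,r)$. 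The key structural observation that makes this work is that the $r\mapsto 1/r$ substitution interchanges the roles played by the mutant and wildtype weights, which is precisely compensated by the $a\mapsto 1-a$ flip of the initial distribution; one should find that the two summands swap (up to the normalisation) under this transformation.

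For the second symmetry I would substitute $(m,c)\mapsto(1-m,1/c)$. Here the natural check is that this transformation exchanges the two summands of the sum above: replacing $m$ by $1-m$ swaps the factors $(1-m)$ and $m$ in the numerators, while replacing $c$ by $1/c$ swaps the denominators $mc+r(1-m)$ and $(1-m)/c+rm$ appropriately. Since $\varrho_1$ does not depend on $c$ or $m$, the normaliser is untouched, so it suffices to show the numerator sum is invariant. I would carry this out by writing both summands over a common denominator and observing that the transformation permutes the terms of the numerator polynomial.

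The main obstacle I expect is purely bookkeeping: keeping the two rational expressions aligned so that one can see the summand-swapping cleanly rather than expanding into an unwieldy single fraction. A clean way to organise the computation is to define the two summands as $S_1(m,a,c,r)$ and $S_2(m,a,c,r)$ and prove the transfer rules $S_1(m,a,c,r)=S_2(m,1-a,c,1/r)$ and $S_1(m,a,c,r)=S_2(1-m,a,1/c,r)$ (and symmetrically with indices swapped). Each such rule is a single-fraction identity that follows by inspection once the substitution is applied, and summing the two rules yields invariance of the total; dividing by the (correspondingly transformed) $\varrho_1$ then gives the stated equalities for $F$. No deeper probabilistic input is needed beyond the already-derived closed form, so the whole argument is elementary once the substitutions are tracked carefully.
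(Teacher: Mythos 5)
Your overall plan --- substituting into the closed form (\ref{eq:surfaceAbsProb}) and dividing by $\varrho_1=r/(r+1)$ --- is the only available route (the paper itself states this corollary without proof), but the step you leave ``by inspection'' is exactly where the argument collapses: both transfer rules you propose are false, and an honest execution of your own plan shows that the corollary as printed is not an identity. Write
\[
S_1(m,a,c,r)=\frac{ra(1-m)}{mc+r(1-m)},\qquad
S_2(m,a,c,r)=\frac{rm(1-a)}{(1-m)/c+rm},
\]
so that $F=(S_1+S_2)/\varrho_1$. For your second rule, substituting $(m,c)\mapsto(1-m,1/c)$ gives $S_2(1-m,a,1/c,r)=\frac{r(1-m)(1-a)}{mc+r(1-m)}$: the denominator matches that of $S_1(m,a,c,r)$, but the numerator carries $1-a$ where $S_1$ carries $a$. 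The per-configuration fixation probabilities do swap under this map, but the weights $a,1-a$ of the initial distribution stay attached to the wrong terms, so the sum is invariant only if $a=1/2$ or $m=1/(1+c)$ (stationary selection). Numerically, $F(1/4,1\given 1,2)=9/7$ while $F(3/4,1\given 1,2)=3/5$. Your first rule fails for a different reason: replacing $r$ by $1/r$ exchanges the roles of mutant and wildtype, so fixation becomes extinction. Writing $\rho_{01}(r)=\frac{r(1-m)}{mc+r(1-m)}$ and $\rho_{10}(r)=\frac{rmc}{(1-m)+rmc}$ for the two per-state fixation probabilities, one finds $\rho_{01}(1/r)=1-\rho_{10}(r)$ and $\rho_{10}(1/r)=1-\rho_{01}(r)$, hence the transformed sum is $1$ minus the original sum, and dividing by $\varrho_1(1/r)=1/(r+1)$ yields the affine relation
\[
r\,F(m,a\given c,r)+F(m,1-a\given c,1/r)=r+1,
\]
not equality of $F$-values. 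Numerically, $F(1/4,1\given 1,2)=9/7$ but $F(1/4,0\given 1,1/2)=3/7$, and indeed $2\cdot\tfrac{9}{7}+\tfrac{3}{7}=3$.

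So the gap is not bookkeeping: the two ``single-fraction identities'' you assert by inspection are precisely what fails, and no amount of careful algebra will close the statement as given. What your computation actually yields are the corrected facts: (i) $F(m,a\given c,r)=F(1-m,1-a\given 1/c,r)$, i.e.\ the vertex swap must flip the initial distribution \emph{together with} the selection policy and $c$; and (ii) $F(m,1-a\given c,1/r)=(r+1)-r\,F(m,a\given c,r)$, which is not a symmetry of $F$ but does map the level set $\{F=1\}$ bijectively onto itself --- and this weaker property is all that is needed to justify the isoline plots in Figure \ref{fig:absortpionProbs}. Had you carried out your stated plan of defining $S_1,S_2$ and \emph{proving} the transfer rules rather than asserting them, the matching denominators with mismatched numerators would have led you directly to (i) and (ii), and to reporting the corollary as misstated together with its correct replacement.
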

\bigskip

Thanks to the symmetries provided by Corollary \ref{cor:Fsym}, it is possible to plot a contour plot of $F$ for different parameter choices, see figure \ref{fig:absortpionProbs}.

\subsection{The case $n=3$}
The size of the transition matrix for the micSMP grows exponentially with $n$, since there are $2^n$ states. Therefore, the expressions for the fixation probabilities already for $n=3$ are not treatable in full generality, since it demands for inverting a six by six matrix. We thus investigate an example weight matrix by Galanis, see \cite{Galanis2017} p. 89, that was initially thought to be a counterexample for the Isothermal Theorem \ref{theo:IsothermalTheorem} in that the isothermal property is not necessary to have Moran fixation. See also \cite{Diaz21} for a discussion of the isothermal case. In fact, Galanis's example is also a counterexample to Theorem \ref{theo:stationarySelectionTheorem} in the sense that stationary selection is also not necessary to have Moran fixation. 


To illustrate the complexity of the situation for $n=3$, we give the full $8\times 8$ transition matrix. We denote a configuration with $\trG{white}{white}{white}$, where vertex one is on the lower right and numbering continues counterclockwise around the centre. Mutants are noted in black. For example $\trG{white}{black}{white}\in\configs_1$ indicates a mutant on vertex two, where all other vertices are occupied by wildtypes. 

The weight matrix $W$ has nine free parameters (constraint by stochasticity), i.e.
    \[
    W:=
    \begin{pmatrix}
        1-w_{12}-w_{13} & w_{12} & w_{13}\\
        w_{21} & 1-w_{12}-w_{23} & w_{23}\\
         w_{31} & w_{32} &1-w_{31}-w_{32}
    \end{pmatrix},
    \]
where $w_{ij}\in[0,1]$. Note that we still assume strong connectivity of $W$.

Finally, the transition matrix and transition graph of the micSMP for $n=3$ are given in figure \ref{fig:transitionGraphn3}. It becomes apparent that it will be impossible to explore all 13 parameters (two each for selection policy and initial distribution). In the next section we thus explore the situation for a fixed weight matrix.

\begin{rem}
There is an additional complication for any initial distribution $\alpha$ that is not concentrated on either $\configs_1$ or $\configs_2$, but possibly on both, when the Moran fixation probabilities $(\varrho_i)_{0<i<3}$ hold. The resulting fixation probability is a mixture of both Moran fixation probabilities, i.e.
    \[\P(T_{fix}<\infty\given X_0\sim \alpha)
    =\alpha_{\given\configs_1}\varrho_1+\alpha_{\given\configs_2}\varrho_2.\]
Nevertheless, it is also possible that we have Moran fixation starting from a configuration with one initial mutant, where starting from a configuration with two initial mutants does not yield Moran fixation. 
\end{rem}



\begin{figure}
\begin{center}
    \begin{tikzpicture}[scale=0.95]
        \tikzset{every path/.style = {->, line width = 1pt, outer sep = 4pt}};
        
        \node (000a) at (-30:6) { \trG[2]{white}{white}{white} };
        \node (000b) at ( 90:6) { \trG[2]{white}{white}{white} };
        \node (000c) at (210:6) { \trG[2]{white}{white}{white} };
        
        \node (100) at ( -30:3) { \trG[2]{black}{white}{white} };
        \node (110) at (  30:3) { \trG[2]{black}{black}{white} };
        \node (010) at (  90:3) { \trG[2]{white}{black}{white} };
        \node (011) at ( 150:3) { \trG[2]{white}{black}{black} };
        \node (001) at ( 210:3) { \trG[2]{white}{white}{black} };
        \node (101) at ( 270:3) { \trG[2]{black}{white}{black} };
        
        \node (111) at ( 0, 0) { \trG[2]{black}{black}{black} };

        \path[
            every node/.style={font=\sffamily\tiny, fill =white},
            every edge/.style={bend left = 15, draw}
        ]
            (010) edge node {$w_{21}$} (110)
            (010) edge node {$w_{23}$} (011)
            (001) edge node {$w_{32}$} (011)
            (001) edge node {$w_{31}$} (101)
            (100) edge node {$w_{12}$} (110)
            (100) edge node {$w_{13}$} (101)
            (110) edge node {$w_{31}$} (010)
            (110) edge node {$w_{32}$} (100)
            (110) edge[bend left=0] node {$w_{23}+w_{13}$} (111)
            (011) edge node {$w_{13}$} (010)
            (011) edge node {$w_{12}$} (001)
            (011) edge[bend left=0] node {$w_{21}+w_{31}$} (111)
            (101) edge node {$w_{21}$} (001)
            (101) edge node {$w_{23}$} (100)
            (101) edge[bend left=0] node {$w_{32}+w_{12}$} (111)
            (100) edge[bend left=0] node {$w_{21}+w_{31}$} (000a)
            (010) edge[bend left=0] node {$w_{12}+w_{32}$} (000b)
            (001) edge[bend left=0] node {$w_{13}+w_{23}$} (000c)
            ;
    \end{tikzpicture}  
\end{center}
\vspace{1cm}
\begin{center}
\begin{tabular}{c|c|ccc|ccc|c}
      & \trG{white}{white}{white} & \trG{white}{black}{white} & \trG{white}{white}{black} & \trG{black}{white}{white}& \trG{black}{black}{white}& \trG{white}{black}{black} & \trG{black}{white}{black}& \trG{black}{black}{black}\\\hline
     \trG{white}{white}{white} &1&0&0&0&0&0&0&0\\\hline
     \trG{white}{black}{white} &$\frac{w_{12}+w_{32}}{r+2}$&$\ast$&0&0&$\frac{r\,w_{21}}{r+2}$&$\frac{r\,w_{23}}{r+2}$&0&0\\
     \trG{white}{white}{black} &$\frac{w_{23}+w_{13}}{r+2}$&0&$\ast$&0&0&$\frac{r\,w_{32}}{r+2}$&$\frac{r\,w_{31}}{r+2}$&0\\
     \trG{black}{white}{white} &$\frac{w_{21}+w_{31}}{r+2}$&0&0&$\ast$&$\frac{r\,w_{12}}{r+2}$&0&$\frac{r\,w_{13}}{r+2}$&0\\\hline
     \trG{black}{black}{white} &0&$\frac{w_{31}}{2r+1}$&0&$\frac{w_{32}}{2r+1}$&$\ast$&0&0&$\frac{r\,(w_{23}+w_{13})}{2r+1}$\\
     \trG{white}{black}{black} &0&$\frac{w_{13}}{2r+1}$&$\frac{w_{12}}{2r+1}$&0&0&$\ast$&0&$\frac{r\,(w_{21}+w_{31})}{2r+1}$\\
     \trG{black}{white}{black} &0&0&$\frac{w_{21}}{2r+1}$&$\frac{w_{23}}{2r+1}$&0&0&$\ast$&$\frac{r\,(w_{32}+w_{12})}{2r+1}$\\\hline
     \trG{black}{black}{black}&0&0&0&0&0&0&0&1
\end{tabular}
\end{center}
    \caption{Transition graph and matrix of the micSMP for $n=3$. Note that the absorbing state $\trG{white}{white}{white}$ is displayed thrice to avoid overlapping transitions. Further, only the weights of $W$ are displayed in the graph to keep it simple.}
    \label{fig:transitionGraphn3}
\end{figure}

\subsection{Weight Matrix by Galanis}
The weight matrix in the counterexample by Galanis is 
\begin{equation}\label{eq:GalanisW}
W=
\begin{pmatrix}
    0 & 1/4 & 3/4\\
    1/4 & 0 & 3/4\\
    1/2 & 1/2 & 0
\end{pmatrix},
\end{equation}
which has stationary distribution $\pi=(2/7,2/7,3/7)$. Note that under stationary distribution $\pi$, states 1 and 2 are strongly lumpable, see \cite{Buchholz1994}. One could interpret this as the property that the model is not able to distinguish whether a mutant resides at site 1 or 2. 

Due to the high complexity, we only look at the neutral selection case $r=1$, in which most of the expressions simplify significantly. 

\begin{prop}\label{prop:Moranfixationn3neutral}
    Let $\langle (X_k)_k, W, \mu,1\rangle$ be a micSMP with weight matrix as defined in (\ref{eq:GalanisW}) and $\mu$ arbitrary selection matrix under neutral selection. Let $\alpha=(a_1,a_2,1-a_1-a_2,0,0,0)$ (starting with one mutant a.s.) and $\mu=(m_1,m_2,1-m_1-m_2)$.

    Then, we have
        \[\P(T_{fix}<\infty\given X_0\sim\alpha)
            =\frac{2a_2 + 3m_1 - 3a_1m_1 + 3a_1m_2 - 5a_2m_1 - 2a_2m_2}{m_1 + m_2 + 2}.
        \]
    The model displays Moran fixation probability $\varrho_1=1/3$ in the following three cases:
    \begin{enumerate}
        \item $a_1=a_2=1/3$ and $\mu$ arbitrary.
        \item $m_1=2/7$, $a_1\neq1/3$ and $a_2=(9a_1-1)/6$.
        \item An implicit solution is given by:
        \[0=
        -a_1+\frac{2-5m_1-2m_2}{3(m_1-m_2)}a_2+\frac{8m_1-m_2-2}{9(m_1-m_2)},
        \]
    if $m_1\neq m_2$.
    \end{enumerate}
\end{prop}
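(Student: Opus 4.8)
The plan is to exploit the neutral-selection hypothesis $r=1$ to linearise the dynamics, then to compute the fixation probability by exhibiting a single harmonic (martingale) function instead of inverting the full $6\times6$ absorption system, and finally to read off the three cases by classifying the solution locus of $\P=\varrho_1$.

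First I would record that for $r=1$ the fitness factor in Definition \ref{def:msp} cancels, so a single update of $(X_k)_k$ reduces to: pick an ordered pair $(v,v')$ with probability $\mu_v W(v,v')$ and overwrite the label of $v'$ by that of $v$; the choice of the procreating vertex no longer depends on the configuration. This makes every one-step transition among the eight configurations explicit. The monochromatic configurations $\vone$ and the empty configuration $\mathbf 0\in\configs_0$ are absorbing, and the six transient configurations split into the single-mutant level $\configs_1=\{e_1,e_2,e_3\}$ (with $e_i$ the configuration whose only mutant sits at vertex $i$) and the double-mutant level $\configs_2$, with moves only between adjacent levels plus holding.

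Rather than invert the transition matrix, I would look for a linear function $g(x):=x\theta^\top$, $\theta=(\theta_1,\theta_2,\theta_3)$, that is harmonic for the chain. Writing out the one-step change gives
\[
\EW[g(X_{k+1})-g(X_k)\given X_k=x]=\sum_{u}x_u\big(\mu_u(W\theta^\top)_u-\theta_u(\mu W)_u\big),
\]
so $g(X_k)$ is a bounded martingale as soon as $\big(\diag(\mu)W-\diag(\mu W)\big)\theta^\top=0$, where $\diag(\mu W)$ carries the row vector $\mu W$ on its diagonal. Since $\vone$ is always a left null-vector of this matrix, a nontrivial $\theta$ exists; normalising by $g(\vone)=\theta\vone^\top=1$ and using $g(\mathbf 0)=0$, optional stopping at the (a.s.\ finite) absorption time yields $\P(T_{fix}<\infty\given X_0=x)=g(x)$ for every configuration $x$. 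For the Galanis matrix (\ref{eq:GalanisW}) this is a $3\times3$ solve giving $\theta=\tfrac{1}{m_1+m_2+2}(3m_1,3m_2,2m_3)$, which I would sanity-check against $\mu=\pi=(2/7,2/7,3/7)$ (there $\theta=(\tfrac13,\tfrac13,\tfrac13)$, consistent with Theorem \ref{theo:stationarySelectionTheorem}) and against the $1\leftrightarrow2$ symmetry of $W$. Evaluating $g$ on $\configs_1$ and pairing with $\alpha$ (tracking the ordering of the entries of $\alpha$ against $e_1,e_2,e_3$ as in Figure \ref{fig:transitionGraphn3}) then produces the stated expression for $\P(T_{fix}<\infty\given X_0\sim\alpha)$.

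For the three cases I would set the resulting probability equal to $\varrho_1=\tfrac13$ and clear denominators, obtaining the single bilinear equation
\[
9a_1(m_2-m_1)+a_2(6-15m_1-6m_2)+(8m_1-m_2-2)=0 .
\]
Reading it as an identity in $(m_1,m_2)$ forces all three coefficients to vanish, giving exactly $a_1=a_2=\tfrac13$, i.e.\ Case~1 (the symmetric initial law, for all $\mu$). Substituting $m_1=2/7$ makes the left-hand side factor as $(9a_1-6a_2-1)(m_2-2/7)$, so for $m_1=2/7$ fixed and $m_2$ free one needs $a_2=(9a_1-1)/6$, which is Case~2, with $a_1\neq\tfrac13$ separating it from Case~1. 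Finally, when $m_1\neq m_2$ the coefficient $9(m_2-m_1)$ of $a_1$ is nonzero, so solving for $a_1$ and rearranging gives precisely the implicit relation of Case~3.

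The transition bookkeeping and the $3\times3$ elimination for $\theta$ are routine; the main obstacle is conceptual rather than computational, namely resisting the non-Markovian $6\times6$ inversion in favour of the harmonic-function shortcut, and then verifying that the three stated cases are genuinely the natural parametrisations of the one locus $\{\P=\varrho_1\}$ — in particular that Case~2 hinges on the factorisation produced by $m_1=2/7=\pi_1$, and that the correspondence between the entries of $\alpha$ and the configurations $e_1,e_2,e_3$ is tracked consistently throughout.
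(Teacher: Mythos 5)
Your proposal is correct, and it proves the proposition by a genuinely different route than the paper. The paper gives no pen-and-paper argument at all: it states that the fixation probability was obtained with a CAS by symbolically inverting the $6\times6$ transient block of the transition matrix of figure \ref{fig:transitionGraphn3} (the analogue of the ansatz (\ref{eq:surfaceAbsProb})), followed by term manipulation to isolate the three cases. You instead exploit neutrality ($r=1$ makes the choice of procreating vertex configuration-independent) to build a linear harmonic function $g(x)=x\theta^\top$: your harmonicity condition $\bigl(\diag(\mu)W-\diag(\mu W)\bigr)\theta^\top=0$ is correct, $\vone$ is indeed a left null vector so a nontrivial $\theta$ exists, and for the matrix (\ref{eq:GalanisW}) the vector $\theta\propto(3m_1,3m_2,2m_3)$ solves all three kernel equations (I verified this). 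Optional stopping at the a.s.\ finite absorption time then gives $\rho_x=g(x)$ for every configuration $x$. Pairing with $\alpha$ in the ordering of figure \ref{fig:transitionGraphn3} --- $a_1$ weights the mutant-at-vertex-2 configuration, $a_2$ the mutant-at-vertex-3 configuration, and $1-a_1-a_2$ the mutant-at-vertex-1 configuration, so that $\P(T_{fix}<\infty\given X_0\sim\alpha)=a_1\theta_2+a_2\theta_3+(1-a_1-a_2)\theta_1$ --- reproduces the stated formula exactly; the na\"ive pairing $a_i\mapsto\theta_i$ would not, so the bookkeeping point you flagged is essential and resolves in favour of your claim. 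Your locus equation $9a_1(m_2-m_1)+a_2(6-15m_1-6m_2)+(8m_1-m_2-2)=0$, the vanishing of its three coefficients (Case 1), the factorisation $(9a_1-6a_2-1)(m_2-2/7)$ at $m_1=2/7$ (Case 2), and division by $9(m_1-m_2)\neq0$ (Case 3) all check out. What your approach buys: a hand-verifiable proof requiring only a $3\times3$ kernel computation instead of a symbolic $6\times6$ inversion, fixation probabilities from every configuration (including $\configs_2$) for free, and a natural extension of the martingale method of Theorem \ref{theo:stationarySelectionTheorem} to non-stationary selection policies in the neutral case. What the paper's CAS route buys: it does not depend on the existence of a linear harmonic function, so it applies in principle to $r\neq1$, where the asymmetry between increase and decrease probabilities generally destroys linear harmonicity.
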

\bigskip

The result is a computation using a CAS, so we skip the proof here. The computation of the fixation probability involves inverting a six by six matrix symbolically, similar to the ansatz in (\ref{eq:surfaceAbsProb}). The rest is a simple manipulation of the terms to try isolate parameters, which is not always possible, as the implicit third solution suggests.



The proposition is indeed only a computation for the case of a single mutant and under neutral selection. A similar computation can be done for two initial mutants, but with initial distribution $\alpha=(0,0,0,a_3,a_4,1-a_3-a_4)$, with $a_3,a_4\in[0,1]$. This would introduce another two parameters and would likely increase the dimension of the solution space.

\section{Final discussion}
The microscopic Spatial Moran Process (micSMP) as a Moran-like Process on a graph plays a pivotal role in \emph{evolutionary graph theory}, see \cite{Nowak2006}. The strategy for the proof from \cite{Lieberman2005} bases on the assumption that the projection $(M_k)_k$ on the number of mutants is a Markovian Birth-and-Death-Process, but this is not generally the case, as is clearly visible in (\ref{eq:p+}) and (\ref{eq:p-}).

With Theorem \ref{theo:stationarySelectionTheorem} we have closed the gap in the proof of \ref{theo:IsothermalTheorem} by Liebermann, Hauert and Nowak, and showed that a much larger range of models with generalised selection of individuals also retains Moran fixation probability. We may thus need to interpret Moran fixation as a consequence of selecting stationary with respect to the population structure respectively the weight matrix, instead of as a consequence of the isothermal property and uniform selection. 

Our study of the simple case of two individuals and Galanasis's counterexample for three individuals in section 4 also hints at another class of models with Moran fixation that depend on the initial distribution as well as on the selection policy, see Propositions \ref{prop:moranfixation} and \ref{prop:Moranfixationn3neutral}. We conjecture, that it is always possible to choose initial condition and selection policy such that Moran fixation occurs for $n\geq2$. Unfortunately, the general case has a lot more parameter choices than the simple case of two individuals, especially since we can have Moran fixation starting with $i$ individuals, but not with $j\neq i$. It is therefore much more difficult to access and to predict suitable parameter choices that allow Moran fixation outside of stationary selection regime. If independence of the initial distribution is required, we conjecture that Theorem \ref{theo:stationarySelectionTheorem} exhausts the class of graph-models with Moran Fixation probability completely. 


Some extensions to the model have been investigated, too. For example in \cite{Melissourgos2022}, where the authors introduce different weight matrices for mutants and wildtypes each. When we use non-uniform selection policies for each type, we find a similar result as Melissourgos et al. in their case under isothermal property. Again, the result holds, if both policies are stationary with respect to their respective weight matrices. The proof is completely analogous to Theorem \ref{theo:stationarySelectionTheorem}.

Finally, we conjecture a \emph{universality of fixation probability} as stated in \cite{Adlam2014}. The main observation is that 
    \[\frac{p^+(x)}{p^+(x)+p^-(x)}=\frac{r}{r+1},\qquad \frac{p^-(x)}{p^+(x)+p^-(x)}=\frac{1}{r+1}\]
for all $x\in\configs$ under stationary selection (in the original article only under isothermal property). Adlam and Nowak interprete this as the micSMP conditional to non-idleness, i.e. the distribution of $M_k$ under the event $\{|X_{k+1}-X_k|\neq 0\}$. Again, this neglects the nature of the resulting process, as this constitutes only a Conditional Markov Chain, but not a classic Markov Chain. The reason is that there is a condition on the immediate future and the same issue that the projection to the number of mutants is not necessarily Markov. Since conditional Markov Chains can be characterised via martingales, see \cite{Bielecki2017}, there might be interesting ways to construct other suitable martingales that give more insight into the dynamics. However, this is beyond the scope of the current project.


Finally, the authors are confident that stationary selection plays a role in further investigation of more general models, like multi-allele-models as in \cite{Etheridge2009} with parent independent mutation or immigration.

\section{Acknowledgements}
We thank Prof. Andrey Pilipenko and Prof. Sylvie R\oe lly for fruitful discussions. 
We further thank Prof. Martin Nowak for pointing us towards \cite{Adlam2014}.

\pagebreak
\printbibliography
\end{document}